\numberwithin{equation}{section}
\renewcommand{\eprint}[1]{\href{https://arxiv.org/abs/#1}{arXiv:#1}}
\newcommand{\pageafter}[1]{#1~pp.}
\newtheorem{theorem}{Theorem}
\crefname{theorem}{Theorem}{Theorems}
\newtheorem{corollary}[theorem]{Corollary}
\crefname{corollary}{Corollary}{Corollaries}
\newtheorem{lemma}[theorem]{Lemma}
\crefname{lemma}{Lemma}{Lemmas}
\newtheorem{proposition}[theorem]{Proposition}
\crefname{proposition}{Proposition}{Propositions}
\crefname{conjecture}{Conjecture}{Conjectures}
\newtheorem{question}[theorem]{Question}
\crefname{question}{Question}{Questions}
\crefname{observation}{Observation}{Observations}
\newtheorem{claim}[theorem]{Claim}
\crefname{claim}{Claim}{Claims}
\crefname{assumption}{Assumption}{Assumptions}
\newtheorem{algorithm}[theorem]{Algorithm}
\crefname{algorithm}{Algorithm}{Algorithms}
\theoremstyle{definition}
\newtheorem{definition}[theorem]{Definition}
\crefname{definition}{Definition}{Definitions}
\newtheorem{remark}[theorem]{Remark}
\crefname{remark}{Remark}{Remarks}
\newtheorem{example}[theorem]{Example}
\crefname{example}{Example}{Examples}
\numberwithin{theorem}{section}
\newdimen\LineSpace
\tikzset{
    line space/.code={\LineSpace=#1},
    line space=5pt
}
\DeclareDocumentCommand \tmix { o } {%
  \IfNoValueTF {#1} {{\ensuremath{t_{\rm mix}}} }{{\ensuremath{t_{\rm mix}^{\mathrm{\scriptstyle{#1}}}}}}%
}
\DeclareDocumentCommand \qc { o } {%
  \IfNoValueTF {#1} {{\ensuremath{q_{\rm c}}} }{{\ensuremath{q_{\rm c}^{\mathrm{\scriptstyle{#1}}}}}}%
}
\DeclareDocumentCommand \qct { o } {%
  \IfNoValueTF {#1} {{\ensuremath{\tilde{q}_{\rm c}}} }{{\ensuremath{\tilde{q}_{\rm c}^{\mathrm{\scriptstyle{#1}}}}}}%
}
\newcommand{\cE}{\mathcal{E}}
\newcommand{\cU}{\mathcal{U}}
\newcommand{\bbE}{{\ensuremath{\mathbb E}} }
\newcommand{\bbH}{{\ensuremath{\mathbb H}} }
\newcommand{\bbN}{{\ensuremath{\mathbb N}} }
\newcommand{\bbP}{{\ensuremath{\mathbb P}} }
\newcommand{\bbR}{{\ensuremath{\mathbb R}} }
\newcommand{\bbZ}{{\ensuremath{\mathbb Z}} }
\newcommand{\bone}{{\ensuremath{\mathbf 1}} }
\newcommand{\bzero}{{\ensuremath{\mathbf 0}} }
\DeclareMathOperator{\diam}{diam}
\DeclareMathOperator{\supp}{supp}
\newcommand{\1}{\mathbbm{1}}
\newcommand{\<}{\langle}
\renewcommand{\>}{\rangle}
\title{\scshape Kinetically constrained models out of equilibrium}
\author{Ivailo Hartarsky}
\author{Fabio Lucio Toninelli}
\affil{Technische Universit\"at Wien, Institut für Stochastik und Wirtschaftsmathematik, Wiedner Hauptstra\ss e 8-10, A-1040, Vienna, Austria
\texttt{\{ivailo.hartarsky,fabio.toninelli\}@tuwien.ac.at}}
\begin{document}

\maketitle

\begin{abstract}
We study the full class of kinetically constrained models in arbitrary dimension and out of equilibrium, in the regime where the density $q$ of facilitating sites in the equilibrium measure (but not necessarily in the initial measure) is close to $1$. For these models, we establish exponential convergence to equilibrium in infinite volume and linear time precutoff in finite volume with appropriate boundary condition. Our results 
are the first out-of-equilibrium results that hold for any model in the so-called critical class, which is covered in its entirety by our treatment. It includes e.g.\ the Fredrickson--Andersen 2-spin facilitated model, in which a site is updated only when at least two neighbouring sites are in the facilitating state. In addition, these results generalise, unify and sometimes simplify several previous works in the field. As byproduct, we recover and generalise exponential tails for the connected component of the origin in the upper invariant trajectory of perturbed cellular automata and in the set of eventually infected sites in subcritical bootstrap percolation models. Our approach goes through the study of cooperative contact processes,  last passage percolation, Toom contours, as well as a very convenient coupling between contact processes and  kinetically constrained models. 
\end{abstract}

\noindent\textbf{MSC2020:} {60K35; 82C22}
\\
\textbf{Keywords:} kinetically constrained models, contact processes, perturbed cellular automata, bootstrap percolation,  convergence to equilibrium, precutoff

\section{Introduction}
Kinetically constrained models (KCM) were introduced in order to study the liquid-glass transition \cite{Fredrickson84} (see \cites{Ritort03,Garrahan11,Cancrini09} for reviews). They are Markov processes featuring a parameter $q\in[0,1]$ tuning the density of facilitating sites. Purposefully, KCM are reversible w.r.t.\ the product Bernoulli measure with parameter $q$. Indeed, it has been proposed that the real-world liquid-glass transition has a purely dynamical origin, that is not reflected in the equilibrium measure. KCM were introduced precisely with the purpose of testing whether glassy behaviour could be explained purely in dynamical terms. Despite having a trivial stationary measure, the degenerate rates of KCM make them very hard to tackle mathematically, by making their dynamics non-attractive, cooperative, heterogeneous, admitting multiple invariant measures, not satisfying coercive inequalities and sometimes featuring ergodicity breaking phase transitions.

In recent years a detailed understanding of KCM at equilibrium has been achieved, especially in two dimensions (see \cite{Hartarsky22phd}*{Chapter 1} for an overview). However, from the physical perspective it is essential to understand their behaviour out of equilibrium, typically after a quench from one temperature to a different one. Rigorous results in this direction are rather limited and will be the subject of a detailed account in \cref{sec:background}. Suffice it to say that with the exception of \cite{Chleboun13}, all out-of-equilibrium results pertain to the class of so-called supercritical modes, for which a finite patch of facilitating sites can trigger relaxation. On the other hand, \cite{Chleboun13} treats subcritical models with an orientation. The main goal of the present work is to deal with critical models. However, our methods work in the greatest possible generality, so we will also cover the other universality classes. Our main results are: the proof that the mixing time of the process in a box of side $n$ and  boundary condition entirely composed of facilitating sites is of order $n$ (see \cref{th:mixing}) and the proof of exponential convergence to equilibrium for the infinite-volume dynamics started from a Bernoulli initial condition {in the ergodic regime} (see \cref{th:convergence}). A comparison with earlier results is given in \cref{sec:background}.

The main novelties of the present work are as follows. Firstly, we set up a general scheme for proving exponential decay for the size of connected clusters of objects in dependent settings (see \cref{sec:Toom}) and showcase two applications of independent interest going far beyond the needs of our main results. We further devise a simple and very robust technique for coupling interacting particle systems, even in the absence of attractiveness (see \cref{sec:coupling}). Moreover, as opposed to previous studies, we develop tools to tackle cooperative models (such that no finite set of facilitating sites is able to propagate), including several useful renormalisation techniques (see \cref{sec:initial:condition,sec:renorm,sec:LPP}). In particular, we provide the first out-of-equilibrium results for any kinetically constrained model in the \emph{critical universality class}.

A major limitation for the study of KCM out of equilibrium is that, like ours, most results work in the perturbative regime of high density of facilitating sites for the equilibrium measure and not necessarily the initial one. Exceptions to this are \cite{Chleboun13}, heavily relying on orientation, and results on the East model \cites{Blondel13b,Chleboun14,Chleboun14,Chleboun16,Couzinie22,Faggionato12,Faggionato13,Ganguly15,Mareche19a}, where a model-specific miracle greatly simplifies the problem.\footnote{Some of the perturbative results on the Fredrickson--Andersen 1-spin facilitated model in one dimension \cites{Blondel13,Blondel19,Ertul22} concern a neighbourhood of $q=1$ with noticeable length. When applied to this model, our treatment also yields reasonable quantitative bounds.} Our study will also be restricted to the perturbative regime for the equilibrium measure, but not the initial one. Nevertheless, it is our hope that, once a robust renormalisation scheme is found for controlling KCM in the non-perturbative regime in terms of the perturbative one, our tools will enable the treatment of the full class of KCM out-of-equilibrium.

\section{Models}
In this section we define our models of interest, KCM, along with several other models, which will play an auxiliary role in the proofs. The reader eager to see the statements of the results in \cref{sec:results} will only need \cref{subsec:families,subsec:KCM}.
\subsection{Update families}
\label{subsec:families}
Let $\|\cdot\|$, $\<\cdot,\cdot\>$ and $d(\cdot,\cdot)$ denote the Euclidean norm, scalar product and distance respectively. An \emph{update family} $\cU$ is a finite non-empty family of finite non-empty subsets of $\bbZ^d\setminus\{0\}$ called \emph{update rules}. We refer to unit vectors 
\[u\in S^{d-1}=\left\{v\in\bbR^d:\|v\|=1\right\}\]
as \emph{directions}. We denote by $\bbH_u=\{x\in\bbZ^d:\<x,u\><0\}$. A direction $u$ is \emph{unstable} for an update family $\cU$, if there exists $U\in\cU$ such that $U\subset\bbH_u$, and \emph{stable} otherwise. An update family is \emph{not trivial subcritical} if it has an unstable direction and \emph{trivial subcritical} otherwise.

While we will not need to distinguish between the other universality classes, it is useful to introduce them for the sake of discussing previous results. We say that an update family is \emph{supercritical}, if there is an open hemisphere consisting only of unstable directions. We say that it is \emph{subcritical} (trivial or non trivial), if every hemisphere contains an open set of stable directions. Finally, the update family is \emph{critical} if it is neither supercritical nor subcritical. We call update families such that there exists $u\in S^{d-1}$ such that $U\subset \bbH_u$ for all $U\in\cU$ \emph{oriented}.

Let us introduce an illustrative example corresponding to the classical \emph{Fredrickson-Andersen $j$-spin facilitated model} (FA-$j$f). Its update family $\cU=\{X\subset\{e_1,\dots,e_d,-e_1,\dots,-e_d\},|X|=j\}$ is given by all $j$-element subsets of the $2d$ nearest neighbours of the origin. One can check that for $j=1$ this family is supercritical, for $j\in\{2,\dots,d\}$ it is critical, while for $j>d$ it is trivial subcritical. The reader may keep in mind the case $j=d$, which is also the most interesting one, throughout the paper. Indeed, all difficulties we face are present for this model.

In the rest of the work, adopting the language of bootstrap percolation and of the contact process, we say that site $x\in\bbZ^d$ is ``infected'' in the configuration $\eta$, if its state at $x$, $\eta_x$, is $1$.

\subsection{Kinetically constrained models}
\label{subsec:KCM}
The $\cU$-KCM is a continuous time Markov process with state space $\Omega=\{0,1\}^{\bbZ^d}$ defined by the following graphical construction (see e.g.\ \cite{Liggett05}*{Section III.6} for background). To each $x\in\bbZ^d$ we attach an independent Poisson point process $P_x$ on $[0,\infty)$ of intensity 1 and uniform random variables $(\Upsilon_x(t))_{t{\in P_x}}$ on $[0,1]$, which are independent and independent of those for other sites. The model has a further parameter $q\in[0,1]$, which we call the \emph{equilibrium density}. We denote by $\eta_x(t)$ the state of site $x\in\bbZ^d$ in the configuration at time $t\in[0,\infty)$. Define the \emph{constraint} at $x\in\bbZ^d$ for a configuration $\eta\in\Omega$ by
\begin{equation}\label{eq:def:cx}
c_x(\eta)=\1_{\exists U\in\cU,\forall u\in U,\eta_{x+u}=1}.\end{equation}
We have 
\begin{equation}
\label{eq:def:KCM}
\eta_x(t)=\begin{cases}
\1_{\Upsilon_x(t)\le q}&t\in P_x\text{ and }c_x(\eta(t-))=1,\\
\eta_x(t-)&\text{otherwise.}
\end{cases}
\end{equation}

Before moving on, let us also give an informal but more intuitive description of the $\cU$-KCM with parameter $q$. Each site $x\in\bbZ^d$ is equipped with a clock which rings at exponentially distributed intervals of time of mean 1 ($P_x$ is the set of clock ring times). When it rings, we verify whether the constraint is satisfied, that is, if there is a completely infected update rule around $x$. If this is the case, we replace the state of $x$ in the configuration by an independent Bernoulli random variable with parameter $q$. Note that in the above definition, we have coupled $\cU$-KCM for all values of $q$ and all initial conditions using the same clock rings and defining the Bernoulli random variables, using the uniform ones, $\Upsilon_x(t)$.

We remark, that $c_x(\eta)$ of \cref{eq:def:cx} does not depend on $\eta_x$, so $c_x(\eta(t-))=c_x(\eta(t))$ for all $t\in P_x$ and $x\in\bbZ^d$. It is easy to see that this implies that the $\cU$-KCM is reversible w.r.t.\ the product Bernoulli measure $\mu_q=\mathrm{Ber}(q)^{\otimes\bbZ^d}$ (see e.g.\ \cite{Liggett05}*{Section IV.2} for background). We emphasise that $\cU$-KCM are not attractive, i.e.\ the natural stochastic order is not preserved by the dynamics (see \cite{Liggett05}*{Section~III.2} for background). We refer to sites in state $1$ as \emph{infected} and sites in state $0$ as \emph{healthy}. Hence, the constraint asks for the presence of suitably arranged infections around the site we are trying to update.

One defines $\qc[KCM]$ as the infimum of all $q\in[0,1]$ such that 0 is a simple eigenvalue of the generator of the $\cU$-KCM with parameter $q$. That is, $\qc[KCM]$  is the critical parameter for ergodicity. It is known by \cite{Cancrini08}*{Proposition~2.5} and \cite{Balister24}*{Corollary~1.6 and Theorem~7.1} that $\qc[KCM]>0$ if and only if $\cU$ is subcritical and $\qc[KCM]=1$ if and only if $\cU$ is trivial subcritical. We will also need the critical parameter of the spectral gap of the generator of the $\cU$-KCM (see \cite{Cancrini08}*{Section 2} for background, but understanding this definition is not essential to the present work):
\[\qct[KCM]=\inf\left\{q>0:\mathrm{gap}>0\right\}.\]
It is believed that $\qct[KCM]=\qc[KCM]$ for all update families, but this has only been shown in some cases (see \cref{rem:qc:qct}).

Fix $\Lambda\subset\bbZ^d$. For any $\omega\in\Omega$ we denote by $\omega_\Lambda\in\Omega_\Lambda=\{0,1\}^\Lambda$ the restriction of $\omega$ to $\Lambda$. We define the $\cU$-KCM $\eta$ on $\Lambda$ with boundary condition $\tau\in\Omega_{\bbZ^d\setminus\Lambda}$ by setting the configuration equal to $\tau$ outside $\Lambda$ at all times. We denote the fully infected (resp.\ healthy) configuration by $\bone$ (resp.\ $\bzero$).

We next introduce the \emph{mixing time} of the $\cU$-KCM  on a finite set $\Lambda$ with some boundary condition $\tau\in\Omega_{\bbZ^d\setminus\Lambda}$ (see \cite{Levin09} for background). Given $\delta\in(0,1)$, we define
\begin{equation}
\label{eq:def:tmix}
\tmix(\delta)=\inf\left\{t\ge 0: {\max_{\rho\in\Omega_\Lambda}}d_{\mathrm{TV}}\left(\bbP\left(\eta^{\rho}(t)\in\cdot\right),\mu_q\right)\le \delta\right\}\in(0,\infty],
\end{equation}
where $d_{\mathrm{TV}}(\mu,\nu)=\sup_{A}(\mu(A)-\nu(A))$ with the supremum running over all events $A$ for the arbitrary probability measures $\mu$ and $\nu$ and $\eta^{\rho}$ is the $\cU$-KCM with initial condition $\rho$.

\subsection{Contact processes}
\label{subsec:CP}
The $\cU$-contact process (CP) is defined by the same graphical construction as the $\cU$-KCM and the same constraint as in \cref{eq:def:cx}. However, we set
\begin{equation}
\label{eq:def:CP}
\zeta_x(t)=\begin{cases}
1&t\in P_x, c_x(\zeta(t-))=1, \Upsilon_x(t)\le q,\\
0&t\in P_x, \Upsilon_x(t)>q,\\
\zeta_x(t-)& \text{otherwise},
\end{cases}
\end{equation} instead of \cref{eq:def:KCM}. That is, the constraint $c_x(\zeta(t-))=1$ is no longer required to be satisfied in order to update the configuration at site $x$ to the value $0$. We define the $\cU$-CP in finite volume with a boundary condition analogously to what was done for KCM in \cref{subsec:KCM}. We emphasise, that we use the same Poisson processes (clock rings) and uniform random variables, so that now $\cU$-KCM and $\cU$-CP for all initial conditions and parameters $q\in[0,1]$ are coupled on the same probability space.

Contrary to KCM, CP are attractive (see \cite{Liggett05}*{Section III.2} for background), we may therefore define its upper invariant measure $\bar\nu$ as the $t\to\infty$ limit in law of $\zeta^\bone(t)$, the $\cU$-CP with initial condition $\bone$. We then define the critical point
\begin{equation}
\label{eq:def:qcCP}\qc[CP]=\inf\left\{q>0:\bar\nu\neq\delta_{\bzero}\right\}.\end{equation}
In other words, $\qc[CP]$ is the critical parameter, above which $\cU$-CP has multiple invariant measures.

\begin{remark}
\label{rem:qcCP}
It is known that $\qc[CP]<1$ if and only if $\cU$ is not trivial subcritical \cite{Gray99}*{Corollary 18.3.2} (the easier ``only if'' direction is contained in the proof of \cite{Gray99}, but also follows from \cite{Balister24}*{Lemma 7.3}). On the other hand, a classical comparison with a branching process shows that $\qc[CP]>0$ for any $\cU$ (see e.g.\ \cite{Liggett99}*{Section I.1}).
\end{remark}
We also extend \cref{eq:def:tmix} to $\cU$-CP without change.

\subsection{Cellular automata with death}
\label{subsec:death}
A \emph{cellular automaton} (CA or $\phi$-CA when we want to emphasise the dependence on the map $\phi$) is specified by a map $\phi:\Omega\to\{0,1\}$ depending only on finitely many sites. Given an initial condition $\omega(0)\in\Omega$, we inductively define for all $t\ge 1, x\in \mathbb Z^d$
\[\omega_x(t)=\phi\left(\omega_{\cdot-x}(t-1)\right).\]
In other words, the map $\phi$ is applied at each site simultaneously in a translation invariant way. The CA is said to be an \emph{eroder}, if for all finite $A\subset \bbZ^d$ there exists $T(A)<\infty$ such that $\omega(0)=\1_{\bbZ^d\setminus A}$ implies $\omega(T(A))=\bone$, that is, finite sets of 0s become extinct after a finite time. The CA is \emph{attractive}, if $\phi$ is non-decreasing for the pointwise partial order on $\Omega$. That is, if $\omega_x\ge\omega'_x$ for some $\omega,\omega'\in\Omega$ and all $x\in\bbZ^d$, then $\phi(\omega)\ge \phi(\omega')$.

Given a cellular automaton $\phi$, we further consider its version with \emph{death} as follows. For $x\in\bbZ^d$ and $t\in\bbN$, let $\xi_{x,t}$ be i.i.d.\ Bernoulli variables with parameter $\delta\in[0,1]$. Then the automaton with map $\phi$ and $\delta$ death is defined by 
\begin{equation}
\label{eq:def:death}\omega_x(t)=\begin{cases}\phi(\omega_{\cdot-x}(t-1))&\xi_{x,t}=0,\\
0&\xi_{x,t}=1,
\end{cases}\end{equation}
for all $t\ge 1$ starting from a given initial condition $\omega(0)\in\Omega$. That is, at each space-time point, we apply the map $\phi$, as in the $\phi$-CA, with probability $1-\delta$ and we change the state to healthy with probability $\delta$. 

For attractive $\phi$, we further denote by $\bar\nu$ the \emph{upper invariant measure}---the limit of the law of the cellular automaton with $\delta$ death and initial condition $\bone$, $\omega^\bone(t)$, as $t\to\infty$. One then defines its stability threshold
\[\delta_{\mathrm{c}}=\sup\left\{\delta\in[0,1]:{\bar\nu\neq\delta_{\bzero}}\right\},\]
where $\delta_\bzero$ is the Dirac measure on $\bzero$. This threshold captures the point up to which death is not strong enough to extinguish infection, if we start from the completely infected state. It is a classical result of Toom \cite{Toom80} that $\delta_{\mathrm{c}}>0$ if and only if the $\phi$-CA is an eroder.

\subsection{Bootstrap percolation}
\label{subsec:BP}
The $\cU$-bootstrap percolation (BP) is the particular CA whose map $\phi$ is defined via
\begin{equation}
\label{eq:def:BP}
\phi(\omega)=\max(\omega_0,c_x(\omega))
\end{equation}
for any $\omega\in\Omega$ with $c_x$ from \cref{eq:def:cx}. In other words, infected sites remain infected, while healthy ones become infected once their constraint is satisfied (i.e.\ there are enough infections around in the sense of $\cU$). One commonly considers
\[\qc[BP]=\inf\left\{q\in[0,1]:\lim_{t\to\infty}\bbP\left(\omega^{\mu_q}_0(t)=0\right)=0\right\},\]
where $\omega^{\mu_q}$ is the $\cU$-BP with initial condition distributed according to $\mu_q$ (there is no other randomness involved). This threshold reflects at which point initial infections with density $q$ are sufficient to almost surely infect the entire lattice. It is known \cite{Cancrini08}*{Proposition~2.5} that in fact for the same update family $\cU$ we have $\qc[KCM]=\qc[BP]$.

Since $\cU$-BP is a CA, one may consider its version with death. That is, for $\delta\ge 0$, $\cU$-BP with $\delta$ death is the $\phi$-CA with $\delta$ death with $\phi$ from \cref{eq:def:BP}. Recalling \cref{subsec:death}, this means that at each time step infected vertices become healthy with probability $\delta$ and stay infected with probability $1-\delta$; healthy vertices whose constraint is satisfied become infected with probability $1-\delta$; healthy sites whose constraint is not satisfied remain healthy with probability 1.

\subsection{Last passage percolation}
\label{subsec:LPP}
Given an update rule $U\subset\bbH_u$ for some $u\in S^{d-1}$, we define the $U$-last passage percolation (LPP) on $\Lambda=\{1,\dots,n\}^d$ as follows. Endow each $x\in\Lambda$ with an i.i.d.\ exponentially distributed random variable $T(x)$ with mean $1$. For each $x\not\in\Lambda$ set $s_x=0$. For every $x\in\Lambda$ inductively define the \emph{$U$-LPP time of $x$} by 
\begin{equation}
\label{eq:def:LPP}
s_x=T(x)+\max_{y\in U}s_{x+y}.
\end{equation}
Indeed, this is possible, because $U$ is contained in an open half-plane. Another way to view $U$-LPP is the following. Sites in $\Lambda$ are initially healthy and those in $\bbZ^d\setminus\Lambda$ are infected. When all neighbours of a site (in the sense of $U$) are infected, it becomes infected at rate 1 and never heals afterwards.

With this representation in mind, it is not hard to check that the set of vertices $x\in\bbZ^d$ where $s_x\le t$ for the $U$-LPP coincides with the set of infected vertices at time $t$ in the configuration of the $\{U\}$-KCM in the box $\Lambda$, with entirely infected boundary condition, healthy initial condition and parameter $q=1$. Note also that if $U={\{0,-1\}^{d}\setminus\{0\}}$, the $U$-LPP on $\Lambda$ coincides with the standard $\{-e_1,\dots,-e_d\}$-LPP on $\Lambda$. Indeed, e.g.\ $s_{x-e_1-e_2}\le s_{x-e_1}\le s_x$ for any $x\in\Lambda$ for $\{-e_1,\dots,-e_{d}\}$-LPP, so the maximum over $U$ in \cref{eq:def:LPP} coincides with the maximum over $\{-e_1,\dots,-e_{d}\}\subset U$.
\section{Results}
\label{sec:results}
We are now ready to state our main results. The first one concerns the mixing time of KCM or CP in finite volume with infected boundary.
\begin{theorem}[Linear mixing]
\label{th:mixing}
Let $\cU$ be an update family, which is not trivial subcritical. There exists $\varepsilon=\varepsilon(\cU)>0$ such that for all $q\in[1-\varepsilon,1]$ the $\cU$-KCM on $\{1,\dots,n\}^d$ with infected boundary condition exhibits precutoff in linear time: there exists $C=C(\cU)>0$ such that for all $\delta\in(0,1)$ and $n$ large enough depending on $\delta$,
\begin{equation}
\label{eq:UBLBtmix}
n/C\le \tmix(\delta)\le Cn.    
\end{equation}
The same holds for the $\cU$-CP.
\end{theorem}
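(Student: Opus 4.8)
The plan is to prove the lower and upper bounds in \cref{eq:UBLBtmix} separately, and to do the CP case first since it is attractive and then transfer the upper bound to KCM via a coupling; the lower bound is soft and essentially the same for both models.

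\textbf{Lower bound.} Since $\cU$ is not trivial subcritical, it has an unstable direction $u$, hence an update rule $U\subset\bbH_u$. The key observation recorded in \cref{subsec:LPP} is that the infected set in the $\{U\}$-KCM with infected boundary and $q=1$ started from $\bzero$ coincides with the sublevel set $\{x:s_x\le t\}$ of the $U$-LPP. By monotonicity in $q$ of the infection (more infections can only help) and by a comparison that adds update rules, the time for the origin of $[1,n]^d$ to become infected in the $\cU$-KCM (or $\cU$-CP) with infected boundary stochastically dominates the corresponding $U$-LPP time along the direction $-u$, which is a sum of $\gtrsim n$ i.i.d.\ mean-one exponentials (one picks a deterministic directed path from a corner to the center using only steps in $-U$). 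Hence with probability bounded away from $0$ the origin is still healthy at time $n/C$, so the law of $\eta(n/C)$ is at total-variation distance $\ge \mu_q(\eta_0=1)-o(1)$ from $\mu_q$ when started from $\bzero$; taking the max over initial conditions in the definition of $\tmix$ only makes this easier. This gives $\tmix(\delta)\ge n/C$ for $n$ large.

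\textbf{Upper bound for CP.} This is the substantial part. One shows that from the maximal state $\bone$ the $\cU$-CP on $[1,n]^d$ with infected boundary reaches a configuration close to $\bar\nu$ in linear time, and simultaneously that from $\bzero$ it is ``filled'' in linear time; attractiveness then squeezes every initial condition. The filling estimate is where the machinery of the paper enters: for $q$ close to $1$ one compares the $\cU$-CP (run backwards / locally) with a CA with small death rate $\delta=1-q$, uses Toom's eroder property (available since $\cU$ not trivial subcritical, via \cite{Toom80} and the oriented/Toom-contour analysis of \cref{sec:Toom}), and the exponential-tail control on the connected cluster of a healthy site to show that, after a linear time, the probability that a given site is healthy is exponentially small, uniformly in $n$ with the infected boundary supplying the ``drive'' at the walls. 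Combining with a union bound over the $n^d$ sites and a standard coupling-from-the-maximal-and-minimal-state argument yields $d_{\mathrm{TV}}(\bbP(\zeta^\xi(Cn)\in\cdot),\bar\nu)\le\delta$ for all $\xi$; finally one checks $d_{\mathrm{TV}}(\bar\nu,\mu_q)$ is itself small for $q$ near $1$ (both measures put mass $\ge 1-O(1-q)$ on $\bone$ locally), or more precisely that in finite volume with infected boundary the relevant stationary measure for mixing is $\mu_q$ and the CP-to-KCM comparison handles the discrepancy. This gives $\tmix^{\mathrm{CP}}(\delta)\le Cn$.

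\textbf{Upper bound for KCM and the main obstacle.} The delicate point is that KCM is \emph{not} attractive, so one cannot directly run the sandwiching argument; here one invokes the coupling between the contact process and the KCM announced in \cref{sec:coupling}, which allows one to dominate the set of healthy sites of the KCM by that of a CP (or a small family of CPs) with a slightly worse parameter, at the cost of a constant in the linear time. Concretely, one couples so that at any time the healthy set of the $\cU$-KCM is contained in the healthy set of an auxiliary $\cU$-CP; the CP upper bound then transfers. The main obstacle I expect is making this coupling robust enough to preserve the linear (rather than polynomially larger) time scale while also being compatible with the renormalisation needed to push $q$ only close to $1$ rather than equal to $1$ — i.e.\ controlling the cooperative filling of a linear-size box uniformly in $n$, which is exactly the content of the Toom-contour and LPP estimates of \cref{sec:Toom,sec:LPP,sec:renorm}. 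Once the healthy-set domination and the linear-time filling of the CP are in hand, the KCM upper bound, and hence the theorem, follows.
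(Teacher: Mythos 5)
Your lower bound does not go through as written. You claim that the infection time of the origin in the $\cU$-KCM (or $\cU$-CP) \emph{stochastically dominates} the $U$-LPP time, and justify this ``by a comparison that adds update rules''. But adding update rules, or setting $q=1$, can only make infection \emph{faster}; the $U$-LPP time is an \emph{upper} bound on the $\{U\}$-KCM infection time at $q=1$ and a fortiori not a lower bound on the $\cU$-model at $q<1$ (where, moreover, healings spoil any monotone $q$-comparison). The relevant lower bound is the finite propagation speed of information: starting from $\bzero$ inside $[1,n]^d$ with $\bone$ boundary, no site in a central box of fixed size can even have its constraint satisfied until a causal chain of update attempts has connected it to the boundary, and the first time this happens is governed by \emph{first}-passage (not last-passage) percolation, i.e.\ a minimum over paths, which is $\gtrsim n$ w.h.p.\ by the Richardson-growth estimate. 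This is what the paper actually uses (together with a small box $\Lambda_\ell$ rather than a single site, to make the argument uniform in $\delta$ and to handle the non-product stationary measure of the CP). Your final remark that ``in finite volume with infected boundary the relevant stationary measure for mixing is $\mu_q$'' for the CP is also incorrect: the CP's finite-volume stationary measure is not $\mu_q$, and the paper compares it with the restriction of $\bar\nu$.

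Your upper bound captures the right high-level shape (CP first via attractiveness, then transfer to KCM by the orange-site coupling of \cref{sec:coupling}), but it leaves the decisive ingredient as a declared ``obstacle'' rather than addressing it. The Toom-contour exponential tail of \cref{sec:Toom} gives decay \emph{in time} only after the box has been warmed up; it does not by itself control the linear burn-in from $\bzero$. The missing piece is exactly \cref{prop:CP:mixing}: one renormalises the $\{U_0\}$-CP in $\Lambda$ with $\bone$ boundary to a standard $\{-e_1,\dots,-e_d\}$-LPP by tracking the times $t_x$ (\cref{lem:CP-LPP-comparison}) at which a renormalised box becomes perfectly coupled for $\zeta^{\bone}$ and $\zeta^{\bzero}$, crucially exploiting the \emph{orientation} of $U_0$ and the observation that coupling at a site only requires one successful update to $0$ once its $U_0$-predecessors are coupled; the path-coupling estimate of \cite{Greenberg09} (\cref{prop:LPP:filling}) then yields the $O(n)$ filling time. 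Without this LPP step there is no mechanism forcing the burn-in to be linear rather than, say, proportional to $n\log n$ or worse. Once you have that, the combination with the monotonicity of orange sets, \cref{lem:coupling}, and the already-established bound \cref{eq:decay:Ot} gives the KCM upper bound exactly as in \cref{sec:mixing}.
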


\begin{remark}
\label{rem:optimality}Let us note that all the conditions above are essential. Indeed, trivial subcritical models are simply not ergodic, as they admit finite healthy regions which cannot change \cite{Balister24}*{Lemma 7.3}. Moreover, there exist non-trivial subcritical  models with $\qc[KCM]$ arbitrarily close to 1 \cite{Hartarsky21}*{Proposition 7.1}, so one cannot hope $\varepsilon$ to be independent of $\cU$. Finally, one cannot change the boundary condition to healthy or periodic (restricting to the ergodic component), since it is known that even 2-neighbour bootstrap percolation with these boundary conditions may have quadratic infection time \cite{Benevides15}, and the $\cU$-BP infection time is a lower bound on the $\cU$-KCM one (see \cite{Martinelli19}*{Lemma 4.3}).
\end{remark}

Our second main result  establishes that, in infinite volume, $\cU$-KCM converge exponentially fast to their equilibrium measure.
\begin{theorem}[Exponential convergence]
\label{th:convergence}
Let $\cU$ be any update family and $\alpha>0$. Then,
 there exist $\varepsilon\in(0,1)$ and $c>0$, such that for any  $p\in[\qct[KCM]{+\alpha},1]$ and $q\in[1-{\varepsilon},1]$ the following holds. Let $(\eta^{\mu_p}(t))_{t\ge 0}$ be the infinite volume $\cU$-KCM with initial distribution\footnote{The initial condition is assumed to be product mostly for simplicity. It will be clear from the proof that e.g.\ any initial condition stochastically dominating $\mu_p$ would do.} $\mu_p$ and parameter $q$. Then for all local functions $f:\Omega\to \bbR$ and $t\ge 0$
\begin{equation}
\label{eq:main:annealed}
\left|\bbE\left[f(\eta^{\mu_p}(t))\right]-\mu_q(f)\right|\le e^{-ct}\|f\|_\infty\cdot|\supp f|/c,\end{equation}
where $\supp f$ is the set of sites on whose state the value of $f$ depends.
\end{theorem}
As in \cref{rem:optimality}, one cannot hope for $\varepsilon$ independent of $\cU$. However, one should expect both \cref{th:mixing,th:convergence} to hold for any $q>\qc[KCM]+\alpha$.
\begin{remark}
\label{rem:qc:qct}
Since $\qct[KCM]$ appears directly in \cref{th:convergence}, let us mention that  for supercritical and critical models it is known that $\qct[KCM]=\qc[KCM]=0$ \cites{Hartarsky21,BalisterNaNb}.\footnote{\label{foot:qctBP}
For supercritical and critical models \cite{BalisterNaNb} gives a stretched exponential decay of the tail of the infection time of the origin for any $q>0$, so $\qct[KCM]=0$ for non-subcritical update families by \cite{Hartarsky21}*{Theorem~3.7}. Note that, while \cite{Hartarsky21} is formulated in two dimensions, the parts we will use do generalise rather straightforwardly to higher dimensions.} 
For trivial subcritical update families we have $\qct[KCM]=\qc[KCM]=1$ \cites{Cancrini08,Balister24}, so that \cref{th:convergence} is empty, as it should. For subcritical non-trivial families it is known that $0<\qc[KCM]\le \qct[KCM]<1$ \cites{Cancrini08,Balister24,Hartarsky21} and the second inequality is believed to be an equality, but this is an important open problem. Equality has been shown for oriented update families \cite{Hartarsky22sharpness} and the general case was reduced to a related question involving only $\cU$-BP in \cite{Hartarsky21}. Furthermore, \cref{th:convergence} cannot hold for $p<\qc[KCM]=\qc[BP]$, since then a.s.\ there are sites whose state remains 0 forever. Hence, if the conjecture $\qc[KCM]=\qct[KCM]$ holds, the range of values for $p$ in the theorem is the best possible.
\end{remark}
Finally, let us mention that in the course of the proof of our main results we will derive consequences on exponential decay in space-time for eroder attractive cellular automata with death, and on space exponential decay for the set of sites eventually reaching state 1 in $\cU$-BP for subcritical $\cU$ with initial condition $\mu_q$. The reader interested in these developments (\cref{cor:BP,cor:Toom}) can directly refer to \cref{sec:Toom}, which can be read independently of the remainder of the paper.

\section{Background}
\label{sec:background}
Before turning to the proof of \cref{th:convergence,th:mixing}, let us discuss previous work on KCM out of equilibrium. We start by mentioning that results of a different kind, concerning graphs whose size diverges jointly with the parameter $q$ tending to $0$,  can be found in \cites{Chleboun14,Pillai17,Pillai19,Hartarsky22CBSEP}, while large deviations in trajectory space have been studied in \cite{Bodineau12}.

Along the lines of our work, much more has been done, mostly for supercritical models, especially the $\{\{-e_1\},\dots,\{-e_d\}\}$-KCM called the East model and FA-1f, that is, the $\{\{-e_1\},\dots,\{-e_d\},\{e_1\},\dots,\{e_d\}\}$-KCM. In all cases roughly the same route has been followed, to the extent possible, along the following steps in that order, each one relying on the previous one.
\begin{enumerate}[label={Step \arabic*.}, ref={Step \arabic*}]
    \item\label{step:convergence} \Cref{th:convergence}, possibly with a weaker stretched exponential decay.
    \item\label{step:precutoff} Positive speed of the infection front and the corresponding precutoff, that is, \cref{th:mixing}.
    \item\label{step:front} Ergodicity of the process seen from the front and law of large numbers for the front position.
    \item\label{step:cutoff} CLT for the front position and cutoff.
\end{enumerate}
\ref{step:convergence} was performed first for the East model in $d=1$ in \cite{Cancrini10}. Like all results on this model, that relies on orientation and further favourable properties, allowing the results to hold for all $q>0$. Certain qualitative convergence results for a model closely related to FA-1f were obtained in \cite{Sudbury97a} (also see there references therein). For FA-1f, \ref{step:convergence} was done with stretched exponential decay for FA-1f for $q>1/2$ in \cite{Blondel13} (pure exponential in $d=1$). For the East model in $d>1$ stretched exponential decay was proved in \cite{Chleboun15}. Convergence for FA-1f was improved to pure exponential in \cite{Mountford19} for $q$ large enough. For the East model in $d>1$ the same was done in \cite{Mareche19a}. The exponential decay was then generalised to all supercritical models in any dimension for $q$ large enough in \cite{Mareche20}, thus including the ones of \cites{Mountford19,Blondel13}, as well as \cites{Cancrini10,Chleboun15,Mareche19a} up to the restriction on $q$. The most general of the above results, \cite{Mareche20}, is contained in \cref{th:convergence}. In summary, before the present work, \cref{th:convergence} was known only for supercritical KCM.

Turning to \ref{step:precutoff}, \cref{th:mixing} was proved  for the East model for any $q>0$ in \cite{Chleboun15}. We believe that even for FA-1f in dimension $d>1$ \cref{th:mixing} is new. However, there is another important work in the direction of \cref{th:mixing}. Namely, in \cite{Chleboun13} this result was proved with a weaker upper bound of order $n\log n$, assuming that the update family is oriented. It should be noted that orientation rules out the possibility for the model to be critical and is a very convenient feature for the analysis, as we will see. On the other hand, the approach of \cite{Chleboun13} has the major advantage of working for any $q>\qc[KCM]$ owing to \cite{Hartarsky22sharpness}*{Theorem 1.6} and \cite{Hartarsky21}*{Theorem 3.7}.

Moving on to \ref{step:front}, in $d=1$ it has been established that the front has a well-defined speed and that the law of the configuration behind the front converges to a limit for large times. This is done for the East model in \cite{Blondel13b} for any $q>0$ and in \cite{Blondel19} for FA-1f for $q$ large enough. In \cites{Chleboun16,Couzinie22} the East model was studied in $d>1$ with the aim to examine the limit shape of the set of updated sites starting from a single infection. Results are still far from establishing that such a limit shape actually exists, but some control on the speed of the front in different directions is obtained.

Finally, \ref{step:cutoff} was achieved in $d=1$ for the East model in \cite{Ganguly15} (see \cites{Faggionato12,Faggionato13} for further results about the $d=1$ East process out of equilibrium) and for FA-1f at $q$ close to 1 in \cite{Ertul22}. This was also obtained for the East model in higher dimensions for a particular domain and boundary condition in \cite{Couzinie22}. We should also mention that in \cite{Lacoin14}*{Theorem 2.4} \ref{step:cutoff} was performed for FA-2f in $d=2$ in the somewhat degenerate case $q=1$, which also coincides with the zero-temperature Ising model with appropriate external field. This can also be viewed as a continuous time version of 2-neighbour BP or a non-oriented LPP.

While it would be extremely interesting to see \ref{step:front} and \ref{step:cutoff} established for general update families, this seems rather remote, given that even the $d$-dimensional East model for $d>1$ has not been handled at that level yet. As it is the case for the 1-dimensional East model and FA-1f, we expect that our \cref{th:convergence,th:mixing} and the tools developed to prove them will play an important role in attacking these questions.

\section{Outline of the proof}
Let us start by sketching the proof of \cref{th:convergence}, which is slightly simpler than the one of \cref{th:mixing}. The proof is composed of several steps corresponding to  \cref{sec:initial:condition,sec:coupling,sec:renorm,sec:Toom}, which are put together in \cref{sec:convergence}. 

The first step (see \cref{sec:initial:condition}) consists in ``warming up'' the initial condition. That is, we improve our initial condition $\mu_p$ with $p>\qct[KCM]$ to one with high density of infections. This is achieved via a renormalisation drawing on \cite{Hartarsky21}. It roughly says that, since $p>\qct[KCM]$, the probability that a site does not become infected within time $t$ in the $\cU$-BP dynamics with the same initial condition decays exponentially with $t$. Since the parameter $q$ of our $\cU$-KCM is close to 1, the same holds for it up to a large enough time, since the $\cU$-KCM essentially reduces to the $\cU$-BP in the absence of recovery events. Hence, looking at a renormalised lattice, we may assume that the initial condition of our dynamics is product with high density of infections.

The second step (see \cref{sec:coupling}) is to reduce the study of the $\cU$-KCM to the CP with update family consisting of a single rule $U_0$ that is oriented. While there is a standard monotone comparison (see \cref{claim:comparison}) that guarantees that all infections of the $\{U_0\}$-CP are infections in the $\cU$-KCM, we need to go further. Namely, we establish that studying a certain set that depends not only on the configuration, but also on the history of the CP, we are able to deduce that the $\cU$-KCM not only has lots of infections, but has actually coupled for all initial conditions larger than the one of the CP. 

Hence, we have reduced our problem to one about the $\{U_0\}$-CP with parameter $q_0$ close to 1 and initial condition close to $\bone$. In doing so, we have lost the reversibility and the product invariant measure of the KCM, but we have gained attractiveness and orientation for the CP. Nevertheless, we are not done yet, because the $\{U_0\}$-CP is by far not as simple as the classical CP, as its dynamics is still cooperative, because $U_0$ may contain more than one vertex.

The third step (see \cref{sec:renorm}) is a further renormalisation generalising and somewhat simplifying the one of \cites{Mareche20}, itself stemming from \cite{Mountford19}. It transforms the $\{U_0\}$-CP into a $\cU_0$-BP with (little) death, where $\cU_0=\{\{0,-1\}^d\setminus \{0\}\}$. To achieve this, we tessellate space-time into large boxes of carefully chosen geometry. Roughly speaking, we ensure that if all neighbours of a box in the directions given by $\cU_0$ are fully infected, then infection propagates with high probability to the box of interest. Moreover, an infected box remains such at the next time step with high probability. If either of these high probability events fails, we view that as a death in the renormalised BP process. We have thus made our process even simpler, as it now evolves in discrete time and no longer depends on the original update family $\cU$, while remaining cooperative.

The fourth step (see \cref{sec:Toom}) is to show that the $\cU_0$-BP is exponentially unlikely to have large space-time clusters of healthy sites. This can be traced back through the previous steps to the CP and then the KCM to yield \cref{th:convergence} (see \cref{sec:convergence}). In fact, we prove this in general for any CA with death which is an eroder (recall \cref{subsec:death}), by developing a novel and very general scheme for leveraging exponential bounds {on the probability of occurrence of objects rooted at a given point} to exponential bounds on clusters {of such objects} and applying this to Toom contours \cite{Toom80}, as recently revisited in \cite{Swart22}. Alternatively, one could employ a multi-scale renormalisation argument, as in \cite{Balister24} transported to the setting of CA with death via \cite{Hartarsky22sharpness}, but this would degrade \cref{th:convergence} to stretched exponential convergence at best.

Finally, we turn to the proof of \cref{th:mixing}, which still relies on all of the above, but requires a substitute for the initial ``warming up'' step above, since we need to deal with arbitrary initial conditions, including $\bzero$. This alternative first step (see \cref{sec:LPP}) consists in yet another renormalisation, this time from the $\{U_0\}$-CP to the standard LPP. We show that after the LPP time at a renormalised site, the corresponding box is coupled for the CP. To do this,  somewhat surprisingly, we look at times when sites become healthy in the CP. Using the orientation of the $\{U_0\}$-CP, we have that, once all sites on whose state the constraint at a given site $v$ depends on are coupled, it remains to wait for a single update at $v$ to the state 0, in order to couple the state at $v$, too. Although these {updates} are rare ($q$ is close to 1), {it still takes a time of order 1 to couple the entire box corresponding to a renormalised site, given that the ones it depends on (in the LPP sense) are already coupled.} Thus, to ensure the CP is coupled on a box $\Lambda$ of size $n$, it suffices to wait until the LPP on a (renormalised) box of size of order $n$ reaches all sites. The latter is known to be linear from \cite{Greenberg09}. 

Hence, after a linear time the CP has reached its equilibrium distribution in the box with $\bone$ boundary condition. By attractiveness of CP, this distribution stochastically dominates the restriction to the box $\Lambda$ of the infinite volume upper stationary measure. Moreover, the CP lower bounds the KCM with any initial condition, so, after this burn-in time, we may perform the same procedure as for the proof of \cref{th:convergence}. Namely, we exploit the relation between $\{U_0\}$-CP and $\cU$-KCM, renormalise the former to $\cU_0$-BP with death and, finally, use that the latter has exponentially small probability to have large healthy space-time clusters. Using this exponential bound, we get that it suffices to wait for a time of order $\log n$ to ensure that the $\cU$-KCM has coupled with high probability, once we have waited for the initial linear burn-in time needed for the LPP to reach all sites in the renormalised version of $\Lambda$.

\section{Warming up the initial condition}
\label{sec:initial:condition}
In this section we start by showing that if the initial condition is  $\mu_p$, with $1-p<1-\qct[KCM]$ possibly much larger than the equilibrium  density $1-q$  of healthy sites, after a sufficiently large but finite time the law of the process dominates a renormalised Bernoulli measure with \emph{large} infection density{, but still not the equilibrium one}.

Let us note that the vectors $v'_i$ in the next lemma are arbitrary at this point. The reader is encouraged to think of them as the canonical basis of $\bbR^d$, while a more convenient choice will appear in \cref{sec:renorm}.
\begin{lemma}
\label{lem:initial:cond}
Let $\cU$ be an update family, $\alpha>0$, $\varepsilon_0>0$ and let $v_1',\dots,v_d'\in\bbZ^d$ be linearly independent. There exists $R_0\in \{1,2,\dots\}$ such that for any $R\ge R_0$ there exists $T_0$ such that for any $T\ge T_0$ there exists $\varepsilon_1>0$ such that for any $q\in[1-\varepsilon_1,1]$ the following holds. Set 
\[\hat B=\sum_{i=1}^d(v'_i[0,R))=\left\{\sum_{i=1}^da_iv'_i:(a_i)_{i=1}^d\in[0,R)^d\right\}\subset \bbR^d\] and $\hat B_x=\hat B+\sum_{i=1}^d Rx_iv'_i$ for every $x\in\bbZ^d$. We can couple all $\cU$-KCM $\eta^{\mu_{p'}}$ with parameter $q$ and initial conditions $\mu_{p'}$ for $p'\ge \qct[KCM]+\alpha$ together with $\xi\sim\mu_{1-\varepsilon_0}$ so that for every $x\in\bbZ^d$, \[\xi_x=1\Rightarrow\;\forall p'\ge \qct[KCM]+\alpha,\eta^{\mu_{p'}}_{\hat B_x}(T)=\bone_{\hat B_x}.\]
\end{lemma}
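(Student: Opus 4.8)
The plan is to leverage the fact that $p > \qct[KCM]$ forces exponential-in-time convergence of $\cU$-bootstrap percolation started from $\mu_p$, then transfer this to the $\cU$-KCM by noting that the KCM and the BP coincide until the first ``recovery'' (update to $0$) occurs near the site of interest, and such recoveries are rare when $q$ is close to $1$. Concretely: first fix a large deterministic time scale $T$. Since $p \ge \qct[KCM]+\alpha$, by the results invoked in \cref{rem:qc:qct} (the exponential tail of the BP infection time, obtained from \cites{Hartarsky21,Balister22b} plus the standard renormalisation improving stretched exponential to exponential), there is $c>0$ with $\bbP(\omega^{\mu_p}_0(\lfloor cT \rfloor) = 0) \le e^{-cT}$ for the $\cU$-BP; by a union bound over the $\le CR^d$ sites of $\hat B_x$ (using translation invariance to centre at $0$), the probability that $\hat B_x$ is not fully infected by BP-time $\lfloor cT\rfloor$ is at most $CR^d e^{-cT}$.

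Next I would compare the continuous-time $\cU$-KCM to discrete-time $\cU$-BP on the same graphical construction. Run the KCM from $\mu_{p'}$ with $p' \ge p$. Couple so that $\eta^{\mu_{p'}}(0) \ge \eta^{\mu_p}(0)$ (monotone coupling of the initial Bernoulli measures — note this is \emph{not} preserved by the KCM dynamics, but only used at $t=0$). In the KCM, as long as no clock ring in the relevant neighbourhood has produced a $0$, every site that BP would infect actually does get infected: more precisely, one shows by induction on the BP steps that if the first $k$ BP steps only ever set sites to $1$, and if in the KCM no update to $0$ has occurred at any site in the (bounded, depending on $k$ and $\cU$) space-time cone feeding into $\hat B_x$ during $[0,T]$, then all sites BP has infected by step $k$ are infected in the KCM by time $T$. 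Quantitatively: there is a finite set $W = W(\cU, R, cT)$ of sites (the $\lfloor cT\rfloor$-step BP light-cone of $\hat B$) such that on the event ``no Poisson clock in $W$ rings with $U_x \le q$ during $[0,T]$'' (probability $\ge 1 - |W|\,T(1-q)$), the KCM dominates the $\lfloor cT\rfloor$-step BP inside $\hat B_x$ for \emph{all} $p' \ge p$ simultaneously (the domination at $t=0$ for all $p'$ follows from the simultaneous monotone coupling). Hence on this event together with the BP event above, $\eta^{\mu_{p'}}_{\hat B_x}(T) = \bone_{\hat B_x}$ for all $p' \ge p$.

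Finally I would package the probabilities. Define, for each $x$, the indicator $\xi_x$ of the intersection of the two good events: (i) $\cU$-BP from $\mu_p$ fills $\hat B_x$ by step $\lfloor cT\rfloor$, and (ii) no recovery clock fires in the finite neighbourhood $W_x = W + \sum x_i v_i'$ during $[0,T]$. Choosing first $R_0$ (irrelevant here, any value works), then $T_0$ large enough that $CR^d e^{-cT} < \varepsilon_3 / 2$, then $\varepsilon_1$ small enough that $|W|\,T(1-q) < \varepsilon_3/2$ for $q \ge 1-\varepsilon_1$, gives $\bbP(\xi_x = 0) < \varepsilon_3$ for every $x$. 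To upgrade $\bbP(\xi_x = 0) < \varepsilon_3$ to the stochastic-domination statement ``$\xi$ dominates $\mu_{1-\varepsilon_3}$'', one uses that $\xi$ is a deterministic function of the underlying (translation-invariant, finite-range in space-time) randomness, hence $\{\xi_x = 1\}$ is a $k$-dependent field for some finite $k$ depending on $\mathrm{diam}(W)$ and $R$; by the Liggett--Schonmann--Stacey domination theorem, a $k$-dependent field with marginals $\ge 1 - \varepsilon_3'$ dominates $\mu_{1-\varepsilon_3}$ provided $\varepsilon_3'$ is small enough — so one simply runs the above with $\varepsilon_3'$ (a function of $\varepsilon_3, k$) in place of $\varepsilon_3$. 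The main obstacle is the inductive KCM-versus-BP comparison in the middle paragraph: one must be careful that ``no recovery in $W$'' is genuinely sufficient, i.e.\ that the constraint $c_x$ being satisfied in BP is also satisfied in the KCM at the relevant times — this works because on the good event the KCM configuration dominates the BP one on the whole cone, and $c_x$ is monotone; making the light-cone bookkeeping precise (it is a space-time, not just spatial, region, since the KCM is in continuous time) is the delicate point, though entirely routine once set up.
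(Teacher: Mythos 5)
Your high-level plan is exactly the paper's: exponential decay of the $\cU$-BP infection time via $p\ge\qct[KCM]+\alpha$, a coupling of KCM with BP valid on the event that no recovery occurs in a space--time window, and an upgrade to stochastic domination by Liggett--Schonmann--Stacey (LSS). However, there is a genuine quantitative gap in the way you arrange the parameters, and it breaks the LSS step.

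The issue is a circularity between the dependence range of the field $\xi$ and the marginal threshold that LSS requires. You run BP for $\sim T$ steps with $R$ fixed (you explicitly say ``$R_0$ irrelevant here, any value works''). The BP light cone of $\hat B_x$ over $\sim T$ steps, and the space--time window $W_x$ controlling recoveries, therefore both have diameter of order $T$, so $\xi$ is $k$-dependent with $k\sim T$. LSS then needs the failure probability at each site to be at most $\delta_0(\varepsilon_3,k,d)$, which degrades at least like $(\varepsilon_3/k^d)^{k^d}$ in $k$. With $k\sim T$, that threshold decays at least like $\exp(-CT^d\log T)$, while the BP failure probability you control is only $\sim R^d e^{-cT}$. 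For every $d\ge1$ the former decays strictly faster than the latter once $T$ is large, so choosing $T_0$ large makes matters worse, not better: you cannot satisfy ``$\bbP(\xi_x=0)<\varepsilon_3'(k(T))$'' by increasing $T$, because the target shrinks faster than your bound. Since the lemma's quantifier order forces $R_0$ to be chosen before $T$, you cannot escape by making $R$ grow with $T$ either. The paper avoids this by \emph{decoupling the number of BP steps from $T$}: it runs BP for only $\sqrt R$ steps (so the light cone has radius $\sim\sqrt R\,\|\cU\|\ll R$ and the field is genuinely $1$-dependent on the renormalised lattice), chooses $R$ large so that $R^d e^{-c\sqrt R}$ is small, and only then chooses $T$ large so that each site has at least one clock ring in each of $\sqrt R$ consecutive sub-intervals of $[0,T]$, which lets the continuous-time KCM ``execute'' each BP step.

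Two smaller points. First, absence of recoveries alone does not ensure the KCM catches up with BP by time $T$; you also need sufficiently many clock rings, ordered so that each BP step can be simulated (this is exactly what the $\sqrt R$ sub-intervals of $[0,T]$ enforce in the paper). Your sketch slides past this (``every site that BP would infect actually does get infected''), but it is easy to add. Second, the event you state as ``no Poisson clock in $W$ rings with $U_x\le q$ during $[0,T]$'' should be ``with $U_x>q$'' -- you want to forbid recovery attempts, not infection attempts.
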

\begin{proof}
Fix $\alpha>0$ and set $p=\qct[KCM]+\alpha$. From \cite{Hartarsky21}*{Theorems 3.5 and 3.7} we have that there exists $c=c(\alpha)\in(0,\infty)$  such that the $\cU$-BP (without death) $\omega$ satisfies $\bbP(\omega_0^{\mu_p}(t)=0)\le e^{-ct}$ for every integer $t\ge 0$. Hence, for any fixed $\varepsilon'>0$, we can choose $R>0$ large enough depending on $c$ and $\varepsilon'$ such that $\sqrt{R}\in\bbN$ and 
\[\bbP\left(\omega_{\hat B}^{\mu_p}\left(\sqrt{R}\right)=\bone_{\hat B}\right)\ge 1-\varepsilon'.\]
Since $R$ is large enough, the above event only depends on $\omega^{\mu_p}(0)$ restricted to $\bigcup_{z\in\{-1,0,1\}^d}\hat B_z$.

Recall that $P_x$ is the Poisson process of clock ring times associated to $x\in\bbZ^d$ from \cref{subsec:KCM} used to couple the $\cU$-KCM with all initial conditions. We further assume initial conditions distributed according to $\mu_{p'}$ for $p'\ge p$ to be coupled in a monotone way. Let us now choose $T>0$ large enough depending on $R$ in such a way that with probability at least $1-\varepsilon'$ for each $i\in\{0,\dots,\sqrt{R}-1\}$ and $x\in \bigcup_{z\in\{-1,0,1\}^d}\hat B_{z}$ we have $P_x\cap (iT/\sqrt R,(i+1)T/\sqrt R)\neq \varnothing$. That is, in each of these $\sqrt R$ intervals of time the clock of each site rings. Finally, if $q$ is close enough to 1 depending also on $T$, we get that with probability at least $1-\varepsilon'$, we have $\Upsilon_x(t)\le q$ for all $t\in P_x\cap [0,T)$ and $x\in\bigcup_{z\in\{-1,0,1\}^d}\hat B_z$. That is, at each clock ring, we attempt to infect the corresponding site.

We claim that if all three events above occur  for some $x\in\bbZ^d$, that is, $\omega^{\mu_p}_{\hat B_x}(\sqrt{R})=\bone_{\hat B_x}$, $P_y\cap(iT/\sqrt{R},(i+1)T/\sqrt R)\neq \varnothing$ and $\Upsilon_y(t)\le q$, for all $y\in{\bigcup_{z\in x+\{-1,0,1\}^d}\hat B_z}$, $i\in\{0,\dots,\sqrt{R}-1\}$ and $t\in P_y\cap[0,T)$, then \begin{equation}
\label{eq:claimeta}
    \eta_{\hat B_x}^{\mu_{p'}}(T)=\bone_{\hat B_x} \text{ for any } p'\ge p.
\end{equation} Since $\omega^{\mu_p}_{\hat B_x}(\sqrt R)=\bone_{\hat B_x}$ and $R$ is large (hence much larger than $\sqrt R$), there exists $X$ with $\hat B_x\subset X\subset \bigcup_{z\in\{-1,0,1\}^d}\hat B_{x+z}$ such that the $\cU$-BP process $\omega^{\mu_p,X}$ on $X$ with boundary condition $\bzero_{\bbZ^d\setminus X}$ and initial state $\omega_X^{\mu_p}(0)$ satisfies $\omega^{\mu_p,X}(\sqrt{R})=\bone_X$.
\footnote{To see this, one may remove the infections at distance more than $C\sqrt{R}$ from $\hat B_x$ for some large constant $C>0$, since they do not reach $X$ by time $\sqrt R$, and take $X$ to be the set of all sites infected by the remaining ones up to time $\sqrt R$.} Then we can prove by induction on $i\in\{0,\dots,\sqrt{R}\}$ that for all $y\in {X}$ we have $\eta_{y}^{\mu_{p'}}(iT/\sqrt{R})\ge \omega_y^{\mu_p,X}(i)$. 

The base is the monotone coupling of the initial conditions. For the induction step, observe that by assumption, for the KCM, no attempt is made to change any state to 0, but at least one attempt is made to change each site to 1. Since the constraint $c_y$ is non-decreasing in the configuration and the $\eta_X^{\mu_{p'}}$ process is non-decreasing in time, if for some $y\in X$ and $i\in\{0,\dots,\sqrt{R}-1\}$ we have $c_y(\omega^{\mu_p,X}(i))=1$, then for any $t\in(iT/\sqrt{R},(i+1)T/\sqrt{R})$ we have $c_y(\eta^{\mu_{p'}}(t))=1$. Applying this to some $t\in P_y\cap(iT/\sqrt{R},(i+1)T/\sqrt{R})\neq\varnothing$, we obtain the induction step: 
\begin{align*}
\eta^{\mu_{p'}}_y\left((i+1)T/\sqrt{R}\right)&{}\ge\eta^{\mu_{p'}}_y(t)=c_y\left(\eta^{\mu_{p'}}(t)\right)\vee \eta^{\mu_{p'}}_y(t-)\\
&{}\ge c_y\left(\omega^{\mu_p,X}(i)\right)\vee \eta^{\mu_{p'}}_y\left(iT/\sqrt{R}\right)\\&{}\ge c_y\left(\omega^{\mu_p,X}(i)\right)\vee \omega^{\mu_p,X}_y(i)=\omega^{\mu_p,X}(i+1).\end{align*}
Hence, the claimed \cref{eq:claimeta} follows by taking $i=\sqrt {R}$.

Thus, for every $x\in\bbZ^d$, on an event $E_x$ of probability at least $1-3\varepsilon'$, the $\cU$-KCM with initial condition $\mu_{p'}$ coupled in a monotone way satisfy
\cref{eq:claimeta}.
Moreover, by construction the events $E_x$ are 1-dependent in terms of $x$, so by the Liggett--Schonmann--Stacey Theorem \cite{Liggett97}, the set of $x$ such that $E_x$ is realised stochastically dominates an i.i.d.\ configuration with parameter at least $1-\varepsilon_0$ such that $\varepsilon_0\to 0$ if $\varepsilon'\to 0$.
\end{proof}

\section{Coupling KCM and CP}
\label{sec:coupling}
In this section we examine the coupling between KCM and CP. Let $\cU$ be an update family which is not trivial subcritical and let 
\begin{equation}
\label{eq:def:norm:U}
\|\cU\|=\max_{U\in\cU,x\in U}\|x\|.
\end{equation}
Fix $U_0\in\cU$  such that $U_0\subset \bbH_u$ for some $u\in S^{d-1}$ (this is possible since the update family $\mathcal U$ is assumed not to be a trivial subcritical one). Let us fix a domain $\Lambda\subset \bbZ^d$ and boundary condition $\tau\in\Omega_{\bbZ^d\setminus\Lambda}$. Fix two parameters $0\le q_0\le q\le 1$. Fix an initial condition $\xi\in\Omega_{\Lambda}$ and denote by $\zeta$ the $\{U_0\}$-CP on $\Lambda$ with boundary condition $\tau$, initial condition $\xi$ and parameter $q_0$. Recall from \cref{subsec:CP,subsec:KCM} that KCM and CP for all update families, initial conditions, domains, boundary conditions and parameters are coupled on the same probability space using the same clock rings $(P_x)$ and the same unifrom random variables $(\Upsilon_x(t))_{t\in P_x}$.

We next consider a set which will contain the discrepancies between $\cU$-KCM with different initial conditions, based on the the trajectory of the $\{U_0\}$-CP. It can be seen as an analogue of second class particles for the exclusion process \cite{Liggett99}*{Section~III.1}, the envelope probabilistic cellular automaton \cite{Busic13}*{Section~4.2} and is similar to the idea of \cite{Gottschau18}*{Section~1.3}. We define the set $O_t\subset \{x\in\Lambda: \zeta_x(t)=0\}$ of \emph{orange} healthy sites to be the c\`adl\`ag process with jumps at clock ring times $\bigcup_{x\in\Lambda} P_x$ defined as follows. We first set $O_0=\{x\in\Lambda:\xi_x=0\}$, so that all healthy sites are initially orange. Then, for each $x\in\Lambda$ and $t\in P_x$, we set
\begin{equation}
\label{eq:def:Ot}
O_t=\begin{cases}O_{t-}\setminus \{x\}&\zeta_x(t)=1,\\
O_{t-}\cup\{x\}&\zeta_x(t)=0,\exists y\in O_{t-},d(x,y)\le \|\cU\|,\\
O_{t-}&\zeta_x(t)=0,\forall y\in O_{t-},d(x,y)> \|\cU\|.\end{cases}\end{equation}
In words, orange sites appear when a site becomes healthy close to an orange site, but they disappear whenever a site becomes infected.

It is clear that $x\in O_t$ implies $\zeta_x(t)=0$, so that $O_t$ is indeed a subset of the healthy sites in the $\{U_0\}$-CP $\zeta$. {Indeed, by construction, $\bigcup_{x\in\Lambda} P_x$ are the only times when the $\{U_0\}$-CP $\zeta$ may change.}
\begin{lemma}
\label{lem:coupling}
Consider the $\{U_0\}$-CP $\zeta$ on $\Lambda$ with boundary condition $\tau$, initial condition $\xi\in\Omega_\Lambda$ and parameter $q_0$. Also consider the $\cU$-KCM $\eta^\bone$ and $\eta^{\xi'}$ with parameter $q\ge q_0$ on $\Lambda$ with boundary condition $\tau$ and initial conditions $\bone$ and $\xi'\in\Omega_{\Lambda}$ respectively, for some $\xi'\ge \xi$. Then almost surely, we have that 
\begin{equation}
\label{eq:coupling}\left\{x\in\Lambda:\eta_x^{\bone}(t)\neq \eta_x^{\xi'}(t)\right\}\subset O_t\end{equation}
for any $t\ge 0$. In particular, if $O_t=\varnothing$, then $\eta^\bone(t')=\eta^{\xi'}(t')$ for all $t'\ge t$ and $\xi'\ge \xi$. The same holds, if we replace the $\cU$-KCM by the $\cU$-CP.
\end{lemma}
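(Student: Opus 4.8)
The plan is to prove \eqref{eq:coupling} by induction on the successive jump times of the coupled processes, exploiting the fact that all three processes $\zeta$, $\eta^{\bone}$ and $\eta^{\xi'}$ use the same graphical representation $(P_x, U_x(t))_{x,t}$. First I would fix a realisation of the Poisson clocks and the uniform marks, order the jump times in $\bigcup_{x\in\Lambda}P_x$ as $0 < t_1 < t_2 < \cdots$ (locally finite on bounded intervals since $\Lambda$ need not be finite, so I would actually work site by site, or restrict to the finite set of sites that can influence a fixed site up to a fixed time — the finite speed of propagation in the graphical construction makes this harmless). The induction hypothesis at time $t_k$ is the conjunction of \eqref{eq:coupling} at $t_k$ together with the monotonicity relations $\eta^{\xi'}(t_k)\ge \zeta(t_k)$ and $\eta^{\bone}(t_k)\ge \zeta(t_k)$, which follow from the standard monotone comparison alluded to as \cref{claim:comparison} and are maintained throughout because CP only adds the extra freedom of setting sites to $0$.

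For the induction step at a clock ring $t=t_{k+1}\in P_x$, I would split into the three cases of \eqref{eq:def:Ot}. If $\zeta_x(t)=1$ (so $c_x(\zeta(t-))=1$ and $U_x(t)\le q_0\le q$), then by monotonicity $c_x(\eta^{\bone}(t-))=c_x(\eta^{\xi'}(t-))=1$, so both KCM set $\eta_x(t)=\1_{U_x(t)\le q}=1$; hence $x$ leaves the disagreement set, consistently with $x$ leaving $O_t$. If $\zeta_x(t)=0$ with $U_x(t)>q_0$: for the KCM, site $x$ is updated to $0$ precisely when $U_x(t)\le q$ and $c_x=1$; since $c_x$ depends only on sites $y$ with $d(x,y)\le\|\cU\|$, if no such $y$ lies in $O_{t-}$ then by induction $\eta^{\bone}_y(t-)=\eta^{\xi'}_y(t-)$ for all those $y$, so $c_x(\eta^{\bone}(t-))=c_x(\eta^{\xi'}(t-))$ and the two KCM perform the same update at $x$ — no new disagreement is created, matching $O_t=O_{t-}$; if on the other hand some such $y\in O_{t-}$, then $x$ is (possibly) added to the disagreement set, but it is also added to $O_t$ by the second line of \eqref{eq:def:Ot}, so the inclusion is preserved. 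In all cases disagreements at sites other than $x$ are untouched and remain in $O_{t-}\subset O_t\cup(O_t\triangle\{x\})$, and monotonicity against $\zeta$ is clearly preserved since in each case the KCM value is $\ge$ the CP value. This gives \eqref{eq:coupling}; the ``in particular'' clause then follows because once $O_t=\varnothing$ the disagreement set is empty, and since $O$ can only grow via the second line of \eqref{eq:def:Ot} which requires an already-orange neighbour, $O_{t'}=\varnothing$ for all $t'\ge t$, whence $\eta^{\bone}(t')=\eta^{\xi'}(t')$.

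The key point making the whole argument work is that the constraint $c_x$ has range $\|\cU\|$ and that $O_t$ is, by its very definition in \eqref{eq:def:Ot}, designed to be closed under this range of influence: whenever a healthy CP site sits within distance $\|\cU\|$ of the current orange set it is itself declared orange, so the complement of $O_t$ is always an ``insulated'' region on which the two KCM evolutions provably agree. The main obstacle — and the only subtlety beyond bookkeeping — is to organise the induction cleanly in infinite volume, i.e.\ to justify that for a fixed site $x$ and time $t$ only finitely many clock rings are relevant; this is handled by the standard finite-speed-of-propagation estimate for the graphical construction (the number of sites that can influence $(x,t)$ through chains of clock rings is a.s.\ finite), after which the induction is carried out within that finite influence set exactly as above. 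Finally, for the last sentence of the lemma, I would observe that the argument used only that the KCM update sets $\eta_x(t)$ to a function of $U_x(t)$ and $c_x$, and that its value is monotone and dominates the CP value; the $\cU$-CP update \eqref{eq:def:CP} has exactly the same two features (it only differs by additionally allowing transitions to $0$, which, as for $\eta$ versus $\zeta$, keeps it above $\zeta$ and does not break the agreement-off-$O_t$ reasoning), so the identical induction applies verbatim with $\eta^{\bone},\eta^{\xi'}$ replaced by the corresponding $\cU$-CP processes.
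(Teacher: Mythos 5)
Your proposal is correct and follows essentially the same route as the paper's proof: induction over the clock rings, a three-way case split according to the outcome of the CP update at $x$ (namely $\zeta_x(t)=1$, or $\zeta_x(t)=0$ with an orange site within range $\|\cU\|$, or $\zeta_x(t)=0$ with no such site), using the monotone comparison \cref{claim:comparison} for the first case and the $\|\cU\|$-closure built into \cref{eq:def:Ot} for the other two, with the CP variant handled by the identical argument. One minor imprecision: $\zeta_x(t)=1$ at $t\in P_x$ does not by itself force $c_x(\zeta(t-))=1$ (it can happen with $\zeta_x(t-)=1$ and $U_x(t)\le q_0$ but the constraint unsatisfied), but this does not affect your conclusion since the maintained domination $\eta^{\bone}(t-),\eta^{\xi'}(t-)\ge\zeta(t-)$ already gives $\eta^{\bone}_x(t)=\eta^{\xi'}_x(t)=1$ in that subcase.
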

Before proving the lemma, let us prove the following standard fact.
\begin{claim}
\label{claim:comparison}
Consider the $\cU$-KCM $\eta^{\xi'}$ and $\{U_0\}$-CP $\zeta$ as in \cref{lem:coupling}. Then almost surely, for all $t\ge 0$ we have
\begin{equation}\label{eq:comparison}\zeta_x(t)\le \eta^{\xi'}_x(t) \text{ for all } x\in {\Lambda}.\end{equation}
\end{claim}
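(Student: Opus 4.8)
The plan is to prove the inequality \cref{eq:comparison} by coupling the two processes through the common graphical construction and showing, via a first-moment/first-time argument, that the inequality is never violated. Both $\zeta$ and $\eta^{\xi'}$ are built from the same Poisson clocks $(P_x)_{x\in\Lambda}$ and the same uniform marks $(U_x(t))$, both have boundary condition $\tau$, and by hypothesis $\xi\le\xi'$, so the inequality holds at $t=0$. Since both processes are pure-jump and change only at the countably many ring times $\bigcup_{x\in\Lambda}P_x$, it suffices to check that the inequality is preserved across each such ring. The only way the inequality $\zeta_x\le\eta^{\xi'}_x$ could fail at a ring time $t\in P_x$ is if $\zeta_x(t)=1$ while $\eta^{\xi'}_x(t)=0$.

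The key step is to rule out this scenario. Suppose for contradiction that $t\in P_x$ is the first time at which \cref{eq:comparison} fails; by right-continuity and the discreteness of ring times this infimum is attained and $\zeta_y(t-)\le\eta^{\xi'}_y(t-)$ for all $y\in\Lambda$ (and trivially for $y\notin\Lambda$, where both equal $\tau_y$). If $\zeta_x(t)=1$, then by \cref{eq:def:CP} we must have $c_x(\zeta(t-))=1$ and $U_x(t)\le q_0$. Since the constraint $c_x$ from \cref{eq:def:cx} is non-decreasing in the configuration and depends only on sites other than $x$ itself, $\zeta(t-)\le\eta^{\xi'}(t-)$ gives $c_x(\eta^{\xi'}(t-))=1$. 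Moreover $U_x(t)\le q_0\le q$, so by \cref{eq:def:KCM} we get $\eta^{\xi'}_x(t)=\1_{U_x(t)\le q}=1$, contradicting $\eta^{\xi'}_x(t)=0$. Hence no such first violation time exists, and \cref{eq:comparison} holds for all $t\ge0$.

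There is essentially no main obstacle here: the statement is the standard monotone comparison between a CP and the corresponding KCM, and the argument is entirely routine once one observes the three crucial facts that (i) the constraint is monotone in the configuration, (ii) the constraint does not depend on the site being updated, so $c_x(\eta(t-))=c_x(\eta(t))$, and (iii) $q_0\le q$ so that any clock ring setting $\zeta_x$ to $1$ also sets $\eta^{\xi'}_x$ to $1$. The only mild care needed is the usual justification that one may legitimately speak of a "first violation time" for these infinite-volume (or large finite-volume) processes — this is handled by working on the finite domain $\Lambda$ with the given boundary condition, where on any compact time interval only finitely many clock rings occur in any finite sub-region, so the dynamics is well-defined and the inductive step over successive ring times is valid.
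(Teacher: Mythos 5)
Your approach matches the paper's — a ring-time induction using monotonicity of the constraint and $q_0\le q$ — but there is a gap in the case analysis. You assert that $\zeta_x(t)=1$ at a ring time $t\in P_x$ forces $c_x(\zeta(t-))=1$ and $U_x(t)\le q_0$, and this is not quite right: the third (``otherwise'') clause of \cref{eq:def:CP} applies when $t\in P_x$, $U_x(t)\le q_0$ and $c_x(\zeta(t-))=0$, in which case $\zeta_x(t)=\zeta_x(t-)$. So $\zeta_x(t)=1$ can hold with $c_x(\zeta(t-))=0$, provided $\zeta_x(t-)=1$. In that branch your step ``$\zeta(t-)\le\eta^{\xi'}(t-)$ gives $c_x(\eta^{\xi'}(t-))=1$'' is unjustified, so you cannot conclude that the KCM actually performs an update at $x$.

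The inequality still holds in this missing case, of course: by the induction hypothesis $\eta^{\xi'}_x(t-)\ge\zeta_x(t-)=1$, and since $U_x(t)\le q_0\le q$, the ring at $t$ either sets $\eta^{\xi'}_x(t)=\1_{U_x(t)\le q}=1$ (if the KCM constraint holds) or leaves $\eta^{\xi'}_x(t)=\eta^{\xi'}_x(t-)=1$ (if it does not); in either case $\eta^{\xi'}_x(t)=1\ge\zeta_x(t)$. The paper's proof handles exactly this by splitting on $\zeta_x(t-)=1$ versus $\zeta_x(t-)=0$, and your argument is complete and identical in spirit once you add that missing branch.
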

\begin{proof}
We first consider the case $\Lambda$ finite, so that we can proceed by induction on the clock rings $\bigcup_{x\in\Lambda}P_x$. \Cref{eq:comparison} holds at $t=0$ since $\xi'\ge \xi$. Assume \cref{eq:comparison} holds for all $t'<t\in P_x$ for some $x\in\Lambda$. If $\Upsilon_{x}(t)>q_0$, \cref{eq:comparison} clearly remains true, since $\zeta_x(t)=0$. On the other hand, if $\Upsilon_x(t)\le q_0\le q$ and $\zeta_x(t)=1$, we have two possibilities. If $\zeta_x(t-)=1$, we are done by using \cref{eq:comparison} for $t-$. Instead, if $\zeta_x(t-)=0$, then necessarily $\zeta_{x+U_0}(t-)=\zeta_{x+U_0}(t)=\bone$ (recall \cref{eq:def:CP}). But then \cref{eq:comparison} for $t-$ implies that the constraint $c_x$ (recall \cref{eq:def:cx}) is satisfied in $\eta^{\xi'}(t)$, so that $\eta^{\xi'}_x(t)=1$ (recall \cref{eq:def:KCM}), concluding the proof of the claim for finite $\Lambda$.

Next assume $\Lambda$ is infinite. It follows from the fact that interactions have finite range, that for any $x\in\bbZ^d$ and $t\ge 0$, the states $\zeta_x(t')$ and $\eta_x^{\xi'}(t')$ for all $\xi'\in\Omega_\Lambda$ and $t'\in[0,t]$ coincide with those obtained by the same clock rings and uniform random variables on a finite domain $\Lambda'$ with boundary condition $\bone_{\bbZ^d\setminus\Lambda'}$ with $\Lambda'$ depending on $x$, $t$ and the clock rings. This standard fact can be traced back to \cite{Harris78} (also see e.g.~\cite{Liggett99}*{Section I.1}). Thus, we can apply the result for finite $\Lambda'$ to obtain the one for infinite $\Lambda$.
\end{proof}
\begin{proof}[Proof of \cref{lem:coupling}]
As in the proof of \cref{claim:non-dec}, we may assume that $\Lambda$ is finite and proceed by induction on the clock rings. Since $\xi'\ge \xi$ and initially all healthy sites in $\xi$ are orange, \cref{eq:coupling} holds at $t=0$. Fix $x\in\Lambda$ and $t\in P_x$ and assume that \cref{eq:coupling} holds for any $t'<t$. Further assume for a contradiction that $\eta_x^\bone(t)\neq\eta_x^{\xi'}(t)$, but $x\not\in O_t$. We consider several cases.

\noindent\textbf{Case 1.} Assume $\zeta_x(t)=1$. Then by \cref{claim:comparison} $\eta_x^\bone(t)=\eta_x^{\xi'}(t)=1$, so \cref{eq:coupling} holds, since it holds for $t'<t$ and the only possible change in the left and right hand sides of \cref{eq:coupling} is at $x$. For the $\cU$-CP instead of the $\cU$-KCM, \cref{claim:comparison} is a direct consequence of attractiveness, so the same reasoning applies.

\noindent\textbf{Case 2.} Assume $\zeta_x(t)=0$ and there exists $y\in O_{t-}$ such that $d(x,y)\le \|\cU\|$. Then by definition $x\in O_t$. Moreover, $O_t\setminus\{x\}=O_{t-}\setminus\{x\}$ and 
\[\left\{z\in\Lambda\setminus\{x\}:\eta_z^\bone(t)\neq\eta^{\xi'}_x(t)\right\}=\left\{z\in\Lambda\setminus\{x\}:\eta_z^\bone(t-)\neq\eta^{\xi'}_x(t-)\right\},\]
so \cref{eq:coupling} at $t-$ concludes the proof.

\noindent\textbf{Case 3.} Assume $\zeta_x(t)=0$ and there does not exist $y\in O_{t-}$ such that $d(x,y)\le \|\cU\|$. Then from \cref{eq:coupling} at $t-$ the $\eta^\bone$ and $\eta^{\xi'}$ processes coincide in the neighbourhood of $x$, so they also coincide after the attempted update at $x$, regardless whether it is successful or not.
\end{proof}

\section{Space-time renormalisation of CP to BP with death}
\label{sec:renorm}
Recall from \cref{sec:coupling} that $U_0\in\cU$ is such that $U_0\subset \bbH_u$ for some $u\in S^{d-1}$. In this section we perform a simple renormalisation of the $\{U_0\}$-CP to the $\cU_0$-BP with death {(recall from \cref{subsec:death,subsec:BP} that BP is a CA, so we may consider its version with death)}, where 
\begin{equation}
\label{eq:def:cU0}\cU_0=\left\{\{0,-1\}^{d}\setminus\{0\}\right\}.\end{equation}
The renormalisation is similar to the ones used in \cites{Mareche20,Mareche19a,Mountford19}, where one obtains oriented percolation as a result of the renormalisation. We start by fixing the relevant geometry.

\begin{figure}
\begin{subfigure}{0.34\textwidth}
\centering
\includegraphics{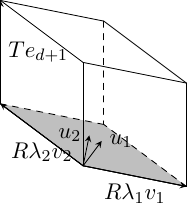}\caption{\label{subfig:box}The box $B$ defined in \cref{eq:def:B} with its base $\hat B${, containing the vectors $u_1,\dots,u_d$,} shaded.}
\end{subfigure}\quad
\begin{subfigure}{0.60\textwidth}
\centering
\includegraphics{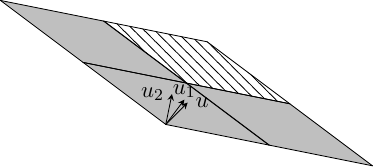}\caption{The $2^d$ box bases $\hat B_{x+y}$ for $y\in\{0,-1\}^d$. The Poisson process points $p_z$ occur in the hatched base $\hat B_x$ in the order of increasing $\<z,u\>$, as indicated by the hatching direction.}
\end{subfigure}
\\
\begin{subfigure}{\textwidth}
\centering
\includegraphics{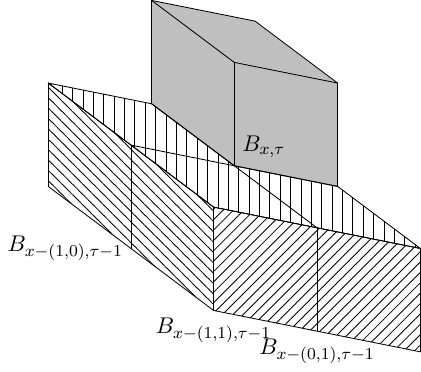}\caption{If the  shaded box $B_{x,\tau}$ is good, we are able to propagate the infection from the three hatched boxes to $B_{x,\tau}$.}
\end{subfigure}
\caption{Illustration of the renormalisation of \cref{sec:renorm} in the case $d=2$.}
\label{fig:renorm}
\end{figure}
Fix linearly independent directions $(u_i)_{i=1}^{d}$  such that: 
\begin{itemize}
\item for all $i\in\{1,\dots,d\}$, $\lambda'_iu_i\in\bbZ^d$ for some $\lambda'_i>0$; \item the $u_i$ are sufficiently close to $u$ so that $U_0\subset\bbH_{u_i}$.
\end{itemize}
The latter condition can be guaranteed thanks to the fact that the $\bbH_{u}$ is defined as the open half-space (recall \cref{subsec:families}). For each $i$ let us decompose, via the Gram--Schmidt algorithm, $u_i=v_i+u_i'$ with $\<v_i,u_j\>=0$ for all $j\neq i$ and $u_i'\in\mathrm{span}(\{u_j:j\neq i\})$, that is, the linear span of the remaining vectors. Note that each $v_i$ can be computed via its own Gram--Schmidt process, rather than the entire family, and these vectors are not necessarily orthogonal (see \cref{subfig:box}). By examining the Gram--Schmidt algorithm, one can check that there also exist $\lambda_i>0$ such that $\lambda_i v_i\in\bbZ^d$. In what follows we consider time as the $d+1$-th coordinate and we abusively identify $u_i\in S^{d-1}$  with $(u_i,0)\in S^{d}$ for all $i$ to lighten notation and similarly we identify $v_i$ with $(v_i,0)$.

Define the \emph{space-time box}  (see \cref{subfig:box})
\begin{align}
\nonumber B&{}=\sum_{i=1}^d(\lambda_iv_i[0,R))+[0,T)e_{d+1}\\
&{}=\left\{a\in \bbR^d:\forall i\in\{1,\dots,d\},\frac{\<a,u_i\>}{\lambda_i\|v_i\|^2 R}\in[0,1)\right\}\times[0,T)\subset \mathbb R^{d+1}\label{eq:def:B}
\end{align} 
for an integer constant $R>0$ chosen sufficiently large depending on $U_0$ and all $u_i,v_i$  and another constant $T>0$ chosen also sufficiently large depending on $R$. We refer to $\hat B=\sum_{i=1}^d(\lambda_iv_i[0,R))\subset \bbR^{d}$ as the \emph{base} of $B$ and to $\hat H=[0,T)$ as its \emph{height}. For any $(x,\tau)=(x_1,\dots,x_d,\tau)\in\bbZ^{d+1}$ we set $B_{x,\tau}=B+\sum_{i=1}^dR\lambda_ix_i{v}_i+T\tau e_{d+1}$, which we view as our renormalised space-time points. We similarly define $\hat B_x$ and $\hat H_\tau$ as the corresponding space and time projections. {This notation is consistent with  that of \cref{sec:initial:condition} for $v'_i=\lambda_i v_i$.}

For each $(x,\tau)\in\bbZ^{d+1}$ we define the event $E_{x,\tau}$ that the point $(x,\tau)$  is \emph{good} if the following two conditions are satisfied:
\begin{itemize}
    \item $\Upsilon_z(t)\le q_0$ for all $z\in\bigcup_{y\in\{0,-1\}^d}\hat B_{x+y}$ and $t\in (\hat H_\tau\cup \hat H_{\tau-1})\cap P_z$, where we recall from \cref{subsec:KCM} that $P_z$ is the Poisson process associated to vertex $z$;
    \item For each $z\in \hat B_x$ there exists $p_z\in P_z\cap \hat H_{\tau-1}$ and $\<z,u\>>\<z',u\>$ implies $p_z>p_{z'}$.
\end{itemize}
The box is called \emph{bad} if it is not good. In words, a good space-time box has a suitable sequence of clock rings and all updates in the box and its $\cU_0$-neighbours attempt to infect the corresponding sites. Notice that $E_{x,\tau}$ only depends on the Poisson process points $P_z\cap\{t\}$ and the uniform random variables $\Upsilon_{z}(t)$ with $(z,t)\in\bigcup_{y\in\{0,-1\}^d} B_{x+y,\tau}\cup  B_{x+y,\tau-1}$,

The following lemma provides the desired coupling between the $\{U_0\}$-CP with parameter $q_0$, denoted $\zeta$, and a $\mathcal U_0$-BP with death, denoted $\widetilde\omega$. In fact, the $\widetilde\omega$ process will not quite be a $\cU_0$-BP with death, but is defined in the same way, given the Bernoulli variables $\xi_{x,t}$ from \cref{eq:def:death}, which will however not be i.i.d.\ (the tilde is there to remind us of this difference). Nevertheless, we will somewhat abusively refer to it as a $\cU_0$-BP with death despite this.
\begin{lemma}
\label{lem:CP-BP-comaprison}
Consider the $\cU_0$-BP $\widetilde\omega$ with death on $\bbZ^d$ with death marks {$\xi_{x,\tau}=1$ }at points $(x,\tau)\in\bbZ^{d}\times\bbN$ such that $B_{x,\tau}$ is bad. Further let its initial condition be given by the configuration where infected sites $x$ are exactly  the sites $x\in\bbZ^d$ such that:
\begin{itemize}
    \item $\hat B_x$ is fully in state 1 in the initial condition $\zeta{(0)}$ of the $\{U_0\}$-CP with parameter $q_0$;
    \item  $\Upsilon_z(t)\le q_0$ for all $z\in \hat B_x$ and $t\in\hat H_0\cap P_z$. 
\end{itemize} 
Then for all $(x,\tau)\in\bbZ^{d}\times\bbN$ and $(y,t)\in B_{x,\tau}$, if $\zeta_y(t)=0$, then $\widetilde\omega_x(\tau)=0$.
\end{lemma}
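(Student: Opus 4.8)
The plan is to prove the equivalent contrapositive by induction on $\tau\ge 0$: for every $x\in\bbZ^d$, if $\widetilde\omega_x(\tau)=1$ then the space-time box $B_{x,\tau}=\hat B_x\times\hat H_\tau$ is entirely infected for the $\{U_0\}$-CP, i.e.\ $\zeta_y(t)=1$ for every integer point $y\in\hat B_x$ and every $t\in\hat H_\tau$. Two elementary facts will be used repeatedly. First, the constraints $c_x$ are non-decreasing in the configuration, so a site cannot stop being infected except through a recovery event. Second, and more importantly, inside the bases involved in a good event there is never a recovery: a recovery at $z$ occurs at a time $t\in P_z$ with $U_z(t)>q_0$, while goodness of $B_{x,\tau}$ forces $U_z(t)\le q_0$ for all $z\in\bigcup_{y\in\{0,-1\}^d}\hat B_{x+y}$ and $t\in(\hat H_{\tau-1}\cup\hat H_\tau)\cap P_z$. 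Hence, inside these bases, being infected is absorbing on the whole time window $\hat H_{\tau-1}\cup\hat H_\tau$.

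For the base case $\tau=0$, the definition of the initial condition of $\widetilde\omega$ states exactly that $\widetilde\omega_x(0)=1$ forces $\hat B_x$ to be fully infected in $\zeta(0)$ and $U_z(t)\le q_0$ for all $z\in\hat B_x$ and $t\in\hat H_0\cap P_z$; hence no site of $\hat B_x$ recovers on $\hat H_0=[0,T)$ and $B_{x,0}$ stays fully infected. For $\tau\ge 1$, assume $\widetilde\omega_x(\tau)=1$. Then there is no death mark at $(x,\tau)$, i.e.\ $B_{x,\tau}$ is good, and by the $\cU_0$-BP update rule either $\widetilde\omega_x(\tau-1)=1$, or $\widetilde\omega_{x+y}(\tau-1)=1$ for all $y\in\{0,-1\}^d\setminus\{0\}$. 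In the first case, the induction hypothesis gives $\hat B_x$ fully infected throughout $\hat H_{\tau-1}$, hence at time $\tau T-$, and the no-recovery fact carries this through $\hat H_\tau$, so $B_{x,\tau}$ is fully infected. In the second case, the induction hypothesis gives that the $2^d-1$ bases $\hat B_{x+y}$, $y\in\{0,-1\}^d\setminus\{0\}$, are fully infected throughout $\hat H_{\tau-1}$, and the task becomes to fill $\hat B_x$ during the slab $\hat H_{\tau-1}$; then no recovery keeps it full on $\hat H_\tau$.

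To fill $\hat B_x$, I would use the second clause of the good event, which provides for each integer $z\in\hat B_x$ a clock ring $p_z\in P_z\cap\hat H_{\tau-1}$ with $p_z>p_{z'}$ whenever $\langle z,u\rangle>\langle z',u\rangle$, and argue by a finite induction along the increasing order of the $p_z$ that each such $z$ becomes infected at time $p_z$ (and stays infected afterwards). Fix such a $z$ and $w\in U_0$. Then $\langle w,u_i\rangle<0$ for all $i$ (since $U_0\subset\bbH_{u_i}$) and $|\langle w,u_i\rangle|\le\|\cU\|$, so, since the bases $\{\hat B_{x'}\}_{x'\in\bbZ^d}$ tile space and $R$ is taken large relative to $\|\cU\|$ and to the $\lambda_i\|v_i\|^2$, the point $z+w$ lies in some $\hat B_{x+y}$ with $y\in\{0,-1\}^d$. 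If $y\neq 0$, then $z+w$ is infected throughout $\hat H_{\tau-1}$ by the outer induction hypothesis. If $y=0$, then $\langle z+w,u\rangle<\langle z,u\rangle$ (since $U_0\subset\bbH_u$), hence $p_{z+w}<p_z$, so $z+w$ was already infected at an earlier step of the inner induction. Either way $z+w$ is infected just before $p_z$, so the constraint $c_z$ is satisfied there, and since $U_z(p_z)\le q_0$ the $\{U_0\}$-CP infects $z$ at $p_z$; no recovery then keeps $z$ infected for the rest of $\hat H_{\tau-1}$ and all of $\hat H_\tau$. Running over the finitely many integer points of $\hat B_x$, the box $B_{x,\tau}$ is fully infected, which closes the induction.

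I expect the main obstacle to be the geometric bookkeeping underlying the third paragraph: verifying that for every integer $z\in\hat B_x$ the whole translate $z+U_0$ falls into the $2^d$ ``backward'' bases $\hat B_{x+y}$, $y\in\{0,-1\}^d$ --- which is precisely the condition dictating how large $R$ must be chosen in terms of $U_0$ and the fixed vectors $u_i,v_i$ --- together with the check that the clock order handed over by the good event genuinely refines the causal order of infection inside the box, which is exactly what the hypotheses $U_0\subset\bbH_u$ and $U_0\subset\bbH_{u_i}$ are for. The remaining ingredients --- the base case, the monotone-constraint and no-recovery facts, the $\cU_0$-BP update bookkeeping, and the observation that the actual propagation into $\hat B_x$ occurs already in the earlier slab $\hat H_{\tau-1}$, so that on $\hat H_\tau$ the infection only has to be sustained --- are routine once this is in place.
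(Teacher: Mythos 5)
Your proposal is correct and follows essentially the same route as the paper's proof: an induction on $\tau$, splitting on whether $\widetilde\omega_x(\tau-1)$ is $1$ or $0$, invoking the geometric inclusion $U_0+\hat B_x\subset\bigcup_{z\in\{0,-1\}^d}\hat B_{x+z}$, and running a secondary induction inside $\hat B_x$ along the ordering given by $\langle\cdot,u\rangle$ coming from the second clause of the good event. The only cosmetic difference is that you argue via the contrapositive ($\widetilde\omega_x(\tau)=1$ implies $B_{x,\tau}$ fully infected), whereas the paper argues by contradiction; the ingredients, the way goodness is exploited, and the handling of the base case are all the same.
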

\begin{proof}
We prove the statement by induction on $\tau\ge 0$. The base case follows from the definition of the initial condition of $\widetilde\omega$. Therefore, let $\tau\ge 1$ and assume for a contradiction that $\zeta_y(t)=0$, $\widetilde\omega_x(\tau)=1$ for some $(y,t)\in B_{x,\tau}$. If $\widetilde\omega_x(\tau-1)=1$, then by induction $\zeta_y((\tau-1)T)=1$ and by the first condition for $B_{x,\tau}$ being good (which holds, because otherwise $\widetilde\omega_x(\tau)=0$, since bad boxes are deaths) this remains true until time $t$, leading to a contradiction. Thus, we may assume that $\widetilde\omega_x(\tau-1)=0$, so that (by the definition \cref{eq:def:BP} of BP) $\widetilde\omega_{x+z}(\tau-1)=1$ for all $z\in\{0,-1\}^d\setminus\{0\}$ and $\zeta_{w}((\tau-1)T)=1$ for all $w\in\hat B_{x+z}$ by induction hypothesis. Since the box $B_{x,\tau}$ is good, there has been no attempt to put $0$ at $y$ after time $\tau T$ in the contact process, so it suffices to show that $\zeta_y(\tau T)=1$ to reach a contradiction.

We claim that by our choice of geometry (see \cref{fig:renorm}), 
\begin{equation}
\label{choice-of-geometry}
U_0+\hat B_{x}\subset \bigcup_{z\in\{0,-1\}^d}\hat B_{x+z}.    
\end{equation}
 To see this, notice that 
\begin{align*}
\hat B_x&{}=\left\{a\in\bbR^d:\forall i\in\{1,\dots,d\},\frac{\<a,u_i\>}{\lambda_i\|v_i\|^2 R}\in[ x_i,x_i+1)\right\},\\
\bigcup_{z\in\{0,-1\}^d} \hat B_{x+z}&{}=\left\{a\in\bbR^d:\forall i\in\{1,\dots,d\},\frac{\<a,u_i\>}{\lambda_i\|v_i\|^2 R}\in[ x_i-1,x_i+1)\right\}.\end{align*}
Therefore, for any $a\in \hat B_x$, $b\in a+U_0$ and $i\in\{1,\dots,d\}$ we have \[\lambda_i\|v_i\|^2 R(x_i-1)\le \<a,u_i\>-\|\cU\|\le \<b,u_i\><\<a,u_i\><\lambda_i\|v_i\|^2 R(x_i+1),\]
if we choose $R$ large enough so that $\lambda_i\|v_i\|^2 R>\|\cU\|$ for every $i$. Hence, $b\in\bigcup_{z\in\{0,-1\}^d}\hat B_{x+z}$ as claimed.

Finally, observe that by the second condition for $B_{x,\tau}$ being good, there has been a sequence of attempts at times $p_a\in\hat H_{\tau-1}$ to put $1$ at each site $a\in\hat B_{x}$. Since the sequence is in the order of increasing scalar product with $u$ and $a+U_0$ is contained in $(a+\bbH_u)\cap \bigcup_{z\in\{0,-1\}^d}\hat B_{x+z}$ (again, provided that $R$ is large enough), the constraint $c_a({\zeta(p_a)})$ is fulfilled for each of them, so we are done.
\end{proof}
The following corollary will be more convenient for our purposes.
\begin{corollary}
\label{cor:CP-BP-comparison}
Fix $U_0$ as above and $\delta>0$. There exist $\varepsilon_0>0$ and $R_1>0$ such that for any $R\ge R_1$ there exists $T_1$ such that for any $T\ge T_1$ there exists $
\varepsilon_2>0$ such that for any $q_0\in[1-\varepsilon_2,1]$ the following holds. Consider the $\{U_0\}$-CP $\zeta$ with parameter $q_0$ and initial condition {given by $\zeta_y(0)=\xi_x$ for all $y\in\hat B_x$ and $x\in \bbZ^d$, where $\xi\sim\mu_{1-\varepsilon_0}$.} Then the trajectory at times $\tau\ge 1$ of the $\cU_0$-BP with $\delta$ death and initial condition $\bone$ is stochastically dominated by the process given by \[\hat \omega_x(\tau)=\1_{\forall (y,t)\in B_{x,\tau-1}, \zeta_{y}(t)=1}.\]
\end{corollary}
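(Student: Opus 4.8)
The plan is to combine Lemma \ref{lem:CP-BP-comaprison} with two probabilistic inputs: a bound on the density of bad boxes, and a bound on the density of ``bad initial boxes''. First I would fix $\delta>0$ and choose $\varepsilon_3>0$ small (to be determined at the end). Apply Lemma \ref{lem:CP-BP-comaprison} with initial condition $\zeta(0)$ given by $\zeta_y(0)=\xi_x$ for $y\in\hat B_x$, $\xi\sim\mu_{1-\varepsilon_3}$: this produces a $\cU_0$-BP with death $\widetilde\omega$ whose death marks sit exactly at the bad boxes and whose initial condition has an infected site at $x$ whenever $\xi_x=1$ \emph{and} $U_z(t)\le q_0$ for all $z\in\hat B_x$, $t\in\hat H_0\cap P_z$. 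The conclusion of that lemma says $\zeta_y(t)=0$ for some $(y,t)\in B_{x,\tau}$ forces $\widetilde\omega_x(\tau)=0$; contrapositively, $\widetilde\omega_x(\tau)=1$ implies $\zeta_y(t)=1$ for all $(y,t)\in B_{x,\tau}$, i.e.\ $\widetilde\omega_x(\tau)\le\hat\omega_x(\tau+1)$ in the notation of the corollary (up to the shift $\tau\leftrightarrow\tau-1$). So it suffices to show that $\widetilde\omega$ (at times $\ge 1$), viewed as a process, stochastically dominates the genuine $\cU_0$-BP with i.i.d.\ $\delta$ death started from $\bone$.

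For this domination I would argue as follows. The key point is that $\widetilde\omega$ is a $\cU_0$-BP run with a \emph{better} initial condition and \emph{fewer} deaths than a $\cU_0$-BP with i.i.d.\ $\delta$ death started from $\bone$, provided two local events are $\delta$-likely. Concretely: (i) the event $E_{x,\tau}$ that $B_{x,\tau}$ is good has probability $\ge 1-\delta/2$, say, once $R$ is large (to make the first condition of goodness likely, using that it only involves the $U_z(t)$ being $\le q_0$ on finitely many Poisson points, hence is likely for $q_0$ close to $1$ and $T$ not too large relative to how close $q_0$ is — this is where the ordering of $\varepsilon_4$ after $T$ after $R$ matters) and for $T$ large depending on $R$ (to make the second, ordering-of-Poisson-points condition likely: there are $|\hat B|$ points to be placed in the correct order within $\hat H_{\tau-1}$, which happens with probability bounded below once $T$ is large, and then all their uniform marks must be $\le q_0$, likely for $q_0$ close to $1$); (ii) the bad initial-condition event at $x$, namely $\{\xi_x=0\}\cup\{\exists z\in\hat B_x, t\in\hat H_0\cap P_z: U_z(t)>q_0\}$, has probability $\le\delta/2$, which holds by taking $\varepsilon_3$ small and then $q_0$ close to $1$ given $R,T$. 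Both events are measurable w.r.t.\ the graphical construction restricted to bounded space-time regions, so the family $(E_{x,\tau})$ is finite-range dependent in $(x,\tau)$, and likewise the bad-initial events are finite-range dependent in $x$; by the Liggett--Schonmann--Stacey theorem \cite{Liggett97}, both are stochastically dominated (from above, for the ``bad'' versions) by Bernoulli fields of parameter as small as we like, in particular $\le\delta$, by choosing $R$, $T$ large and $q_0$, $1-\varepsilon_3$ close to $1$.

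Given this, I would construct the domination by an explicit monotone coupling: run $\cU_0$-BP with i.i.d.\ $\delta$ death $\omega^{\bone}$ and $\widetilde\omega$ on a common probability space so that every death of $\omega^{\bone}$ that is not forced by a bad box is an extra death (legitimate since $\cU_0$-BP with death is monotone in both its death set and its initial condition — removing deaths and enlarging the initial set only increases the configuration pointwise at all later times), and so that every initially infected site of $\omega^\bone$ that is not initially infected for $\widetilde\omega$ is exactly a bad-initial site. The two Bernoulli($\le\delta$) dominations above let us realise both discrepancies inside the i.i.d.\ $\delta$-death resp.\ the complement of the $\bone$ initial condition. A standard induction on $\tau$ (identical in spirit to the induction in the proof of Lemma \ref{lem:initial:cond}) then gives $\omega^{\bone}_x(\tau)\le\widetilde\omega_x(\tau)$ for all $\tau\ge 1$, and composing with the contrapositive of Lemma \ref{lem:CP-BP-comaprison} yields $\omega^\bone_x(\tau)\le\hat\omega_x(\tau+1)$, which is the asserted stochastic domination (after re-indexing time).

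The main obstacle is the bookkeeping in step (i): one must verify that goodness of a box can genuinely be made $\delta$-likely with the quantifier order ``$\exists\varepsilon_3$, $R_1$; $\forall R\ge R_1\,\exists T_1$; $\forall T\ge T_1\,\exists\varepsilon_4$; $\forall q_0\ge 1-\varepsilon_4$'' — in particular that the probability of the Poisson-ordering event in $\hat B_x$ does not degrade as $R$ grows faster than $T$ can compensate, and that the number of uniform marks one needs to be $\le q_0$ (which grows with $R$ and $T$) is still controlled once $q_0$ is taken close enough to $1$ \emph{after} $R,T$ are fixed. Everything else is monotonicity of BP with death plus the Liggett--Schonmann--Stacey comparison, both of which are routine.
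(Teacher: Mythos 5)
Your approach matches the paper's: Lemma~\ref{lem:CP-BP-comaprison} plus Liggett--Schonmann--Stacey on good boxes and good initial sites, plus monotonicity of $\cU_0$-BP with death. The probabilistic bookkeeping in step~(i), which you correctly identify as the main verification, is fine and handled in the paper with the same quantifier order. However, there is a genuine gap in the coupling step.

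You aim to prove $\omega^{\bone}_x(\tau)\le\widetilde\omega_x(\tau)$ for all $\tau\ge 1$ by a monotone coupling in which $\omega^{\bone}$ has a larger death set than $\widetilde\omega$ and ``every initially infected site of $\omega^{\bone}$ that is not initially infected for $\widetilde\omega$ is exactly a bad-initial site.'' But $\omega^{\bone}(0)=\bone$, so at time $0$ the initial conditions satisfy $\omega^{\bone}(0)\ge\widetilde\omega(0)$, which is the \emph{wrong} direction for the domination you want; the extra deaths and the larger initial condition pull the inequality opposite ways, so no direct monotone coupling gives $\omega^{\bone}(\tau)\le\widetilde\omega(\tau)$ from these two facts. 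Relatedly, the phrase ``the complement of the $\bone$ initial condition'' does not denote anything ($\bone$ has no healthy sites), which reflects the same confusion. Moreover, even granting $\omega^{\bone}(\tau)\le\widetilde\omega(\tau)\le\hat\omega(\tau+1)$, ``re-indexing time'' gives $\omega^{\bone}(\tau-1)\le\hat\omega(\tau)$, not the asserted $\omega^{\bone}(\tau)\le\hat\omega(\tau)$; the two are not equivalent unless one separately invokes the (valid but unmentioned) fact that $\omega^{\bone}$ started from $\bone$ is stochastically non-increasing in time.

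The correct alignment, which is what the paper uses, is to compare $\omega^{\bone}(\tau+1)$ with $\widetilde\omega(\tau)$. The key one-line observation is that $\omega^{\bone}(1)\sim\mu_{1-\delta}$ (the first step from $\bone$ is pure $\delta$-death), so after LSS has been applied jointly to the good-box field and the good-initial field (giving domination from below of the $\{\widetilde\omega(0)$ infected$\}$ set and from above of the $\widetilde\omega$-death set by i.i.d.\ Bernoulli fields of parameters $1-\delta$ and $\delta$ respectively), one has $\omega^{\bone}(1)\preceq_{\rm st}\widetilde\omega(0)$, both being product measures at density $1-\delta$ versus at least $1-\delta$. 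Attractiveness of $\cU_0$-BP with death (monotone in initial condition and antitone in deaths) then propagates this to $\omega^{\bone}(\tau+1)\preceq_{\rm st}\widetilde\omega(\tau)$ for all $\tau\ge 0$, and composing with $\widetilde\omega(\tau)\le\hat\omega(\tau+1)$ gives $\omega^{\bone}(\tau)\preceq_{\rm st}\hat\omega(\tau)$ for $\tau\ge 1$, as required.
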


\begin{proof}
Fix $\varepsilon_0>0$ small enough depending on $\delta$. Taking $T$ large after $R$ and then $\varepsilon_2>0$ small, it is clear that the probability of a box being good can be made larger than $1-\varepsilon_0$. Moreover, good boxes together with the initial condition $\widetilde\omega(0)$ from \cref{lem:CP-BP-comaprison} form a percolation with bounded range of dependence. Therefore, by the Liggett--Schonmann--Stacey Theorem \cite{Liggett97} it stochastically dominates an independent Bernoulli field with parameter $1-\delta$, provided $\varepsilon_0$ is small enough. By attractiveness of $\cU_0$-BP, this together with \cref{lem:CP-BP-comaprison} completes the proof. Indeed, we simply observed that the first step of $\cU_0$-BP with $\delta$ death and initial condition $\bone$ has distribution $\mu_{1-\delta}$. 
\end{proof}

\section{Exponential decay}
\label{sec:Toom}
In this section we establish several exponential decay properties. It can be viewed independently of the rest of the paper and will entail results of independent interest. We therefore adopt a rather abstract and general framework, to keep the approach as flexible as possible. {It will be convenient to work in $\bbZ^D$ with $D\ge 1$, which will play the role of $d+1$ in our original setting.}
\subsection{Decorated set systems}
\begin{definition}[$k$-connectivity]
\label{def:kconnected}
Fix a positive real $k$. We say that a set $X\subset {\bbR}^{D}$ is \emph{$k$-connected}, if for every $x,y\in X$ there exists a sequence $x_0=x,x_1,\dots, x_m=y$ of distinct elements of $X$ such that $d(x_i,x_{i+1})\le k$ for all $i\le m-1$. We call such a sequence a \emph{$k$-connected path} with \emph{endpoints} $x$ and $y$.
\end{definition}

\begin{definition}[Decorated set system]
\label{def:decorated:set:systems}
For any set $Z\subset \bbZ^d$ we fix an arbitrary set $\Gamma_Z$ of \emph{possible decorations}. We allow some $\Gamma_Z$ to be empty, making the corresponding sets $Z$ impossible to decorate. A \emph{decorated set} is a pair $(Z,\gamma)$ with $Z\subset\bbZ^D$ nonempty and bounded and $\gamma\in\Gamma_Z$. Two decorated sets $(Z_1,\gamma_1)$ and $(Z_2,\gamma_2)$ are called \emph{disjoint}, if $Z_1\cap Z_2=\varnothing$. A \emph{decorated set system} is a probability measure $\bbP$ and a function $E$ that associates to each decorated set $(Z,\gamma)$ an event $E(Z,\gamma)$ in such a way that for any finite set of disjoint decorated sets $(Z_i,\gamma_i)_{i\in I}$ we have 
\begin{equation}
\label{eq:negative:association}
\bbP\left(\bigcap_{i\in I}E(Z_i,\gamma_i)\right)\le \prod_{i\in I}\bbP\left(E(Z_i,\gamma_i)\right).
\end{equation}
For $x\in\bbZ^D$, we denote by 
\[E(x)=\bigcup_{(Z,\gamma):x\in Z\subset\bbZ^D,\gamma\in\Gamma_Z}E(Z,\gamma)\]
the event that there exists a decorated set containing $x$ whose event occurs.
\end{definition}
In all applications of this construction below, we will actually have equality in \cref{eq:negative:association} but we work under this more general condition, as the proof of \cref{prop:set:systems} works exactly the same with inequality or with equality. In what follows for any $X\subset \bbR^{D}$ we denote $\diam(X)=\sup_{x,y\in X}(d(x,y))$ with the convention $\diam(\varnothing)=-\infty$.
\begin{proposition}
\label{prop:set:systems}
Consider a decorated set system. Assume that for some $C>1$ and $\epsilon>0$ small enough depending on $C$ the following hold:
\begin{enumerate}
    \item\label{cond:1} For all decorated sets $(Z,\gamma)$ we have $\diam(Z)\le C|Z|$.
    \item\label{cond:2} For every $z\in\bbZ^{D}$ the number of decorated sets $(Z,\gamma)$ such that $|Z|=m$ and $z\in Z$ is at most $C^m$.
    \item\label{cond:3} For all decorated sets $(Z,\gamma)$ we have $\bbP(E(Z,\gamma))\le \epsilon^{|Z|/C}$.
\end{enumerate}
Fix $n,k\ge 1$ and $x\in\bbZ^{D}$. Let $\cE(x,n,k)$ denote the event that there exist $y\in\bbZ^D$ with $d(x,y)\ge n$ and a $k$-connected path $P$ with endpoints $x$ and $y$ such that $E(p)$ occurs for each $p\in P$. Then
\[\bbP(\cE(x,n,k))\le \epsilon^{n/({7}C+{7}k)^2}.\]
\end{proposition}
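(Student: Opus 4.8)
The plan is to run a Peierls-type argument on the event $\cA_n$ that $x$ is an endpoint of a $k$-connected path $P$ with $\diam(P)\ge n$ (writing $d(x,y)\ge n$ forces $\diam(P)\ge n$) all of whose points are covered by decorated sets with occurring events. The obvious obstacle is that the decorated sets $(Z_p,\gamma_p)$ attached to different path points $p$ may overlap heavily, so the product bound \eqref{eq:negative:association} cannot be applied to the full collection. First I would extract from the covering family a \emph{disjoint subfamily} that still captures a positive fraction of the path: order the covering sets, greedily keep $(Z_{i_1},\gamma_{i_1}),(Z_{i_2},\gamma_{i_2}),\dots$ discarding any set meeting a previously kept one, and observe that since each kept $Z$ has $\diam(Z)\le C|Z|$ by condition~\ref{cond:1}, each kept set ``blocks'' only a bounded-diameter region; hence the kept sets still cover a set $P'\subset P$ which, although not $k$-connected, is $k'$-dense along $P$ for $k'=k+2\max C|Z_{i_j}|$. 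The cleaner route is to first pass to a sparse sub-path $p_0,p_1,\dots,p_m$ of $P$ with consecutive points at distance in a controlled window (roughly between $r$ and $(something)\, r$ where $r$ is a scale to be optimised), of length $m\gtrsim n/r$; then for each $p_j$ pick one covering decorated set $(Z_j,\gamma_j)$; then thin $\{(Z_j,\gamma_j)\}$ to a disjoint subfamily indexed by $J$ with $|J|\gtrsim m\big/(\text{local multiplicity})$, where the local multiplicity is controlled because a set of size $s$ has diameter $\le Cs$ and thus can contain only boundedly many of the well-separated $p_j$.

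With a disjoint subfamily $(Z_i,\gamma_i)_{i\in J}$ in hand, \eqref{eq:negative:association} together with condition~\ref{cond:3} gives
\[
\bbP\Bigl(\bigcap_{i\in J}E(Z_i,\gamma_i)\Bigr)\le \prod_{i\in J}\epsilon^{|Z_i|/C}=\epsilon^{\left(\sum_{i\in J}|Z_i|\right)/C}\le \epsilon^{|J|/C},
\]
since each $|Z_i|\ge 1$. The next step is the union bound: I would sum over all possible sparse sub-paths and all possible disjoint covering families. A sparse sub-path $p_0=x,p_1,\dots,p_m$ with $m\asymp n/k'$ steps of bounded length is specified by at most $(Ck')^{D\cdot m}$ choices (each step lies in a ball of radius $O(k')$, whose lattice points number $\le (C'k')^D$); and for each $p_j$, condition~\ref{cond:2} says the number of decorated sets of size $s$ containing it is $\le C^s$, and summing the geometric-type series over $s\ge 1$ the number of choices of $(Z_j,\gamma_j)$ is $\le \sum_{s\ge1} C^s \le 2C$ once... no — that diverges; instead one keeps the size $s_j=|Z_j|$ as part of the data, pays $C^{s_j}$ per choice, and the factor $\epsilon^{s_j/C}$ from condition~\ref{cond:3} beats it when $\epsilon$ is small enough depending on $C$. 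So the full union bound reads, schematically,
\[
\bbP(\cA_n)\le \sum_{m\gtrsim n/k'}\;(C'k')^{Dm}\;\sum_{s_1,\dots,s_m\ge1}\ \prod_{j} C^{s_j}\ \cdot\ \epsilon^{\left(\sum_j s_j\right)/(C\cdot(\text{mult}))},
\]
where the multiplicity bound (how many $p_j$ a single kept $Z_i$ can absorb) is $\le C$-ish because $\diam Z_i\le C s_i$ and the $p_j$ are $k'$-separated, so roughly $(C s_i/k')^D$; absorbing all these polynomial-in-$s_i$ and geometric factors into a slightly larger constant and requiring $\epsilon$ small, the sum over $s_j$ converges to $O(1)^m$ and the sum over $m$ is geometric, yielding $\bbP(\cA_n)\le \epsilon^{cn}$ for an explicit $c$.

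The main work is purely bookkeeping: tracking how the scale $r$ (equivalently $k'$), the path length $m$, the multiplicity, and the combinatorial factors $(Ck')^{Dm}$ and $C^{s_j}$ trade off, so as to land exactly on the exponent $n/(7C+7k)^2$ claimed. I expect the hard part to be getting this constant right — in particular, choosing the separation scale for the sparse sub-path as a linear function of $k$ and $C$ (so that $k'=O(C+k)$, and the number of lattice directions per step is $O((C+k)^D)$), and then checking that after taking logarithms the geometric-series corrections contribute at most a bounded multiplicative loss, leaving $\log(1/\bbP(\cA_n))\ge \frac{n}{(7C+7k)^2}\log(1/\epsilon)$. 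The disjointification step (showing the kept family still covers an $\Omega(n)$-long, suitably separated set of points) and the verification that $d(x,y)\ge n \Rightarrow \diam P\ge n$ are the only genuinely geometric inputs, and both are short; everything else is the optimisation of constants in the Peierls sum.
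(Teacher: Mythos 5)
Your overall philosophy (Peierls argument, pass to a disjoint subfamily, apply negative association, union-bound, absorb entropy factors with small $\epsilon$) is the right one and matches the paper's, but the two steps you dismiss as ``purely bookkeeping'' are exactly where the non-trivial content lives, and as sketched they do not go through.

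The first problem is the disjointification. You acknowledge that the naive greedy step gives a separation scale $k'=k+2\max C|Z_{i_j}|$ that depends on the sizes of the kept sets --- which are unbounded --- and then propose a ``cleaner route'' via a sparse sub-path with spacing $r\sim C+k$. But this does not fix the issue: the covering set $Z_{p_j}$ attached to a sub-path point $p_j$ can have diameter up to $C|Z_{p_j}|$ with $|Z_{p_j}|$ arbitrarily large, so a single kept set can ``block'' arbitrarily many of your well-separated $p_j$'s, and a greedy thinning that simply discards newcomers meeting an already-kept set can fail to cover most of the path (and in particular has no guarantee of spanning distance $\geq n$). The paper resolves exactly this with two ingredients you are missing: (i) it thickens each $Z$ to $\bar Z=\{x:d(x,Z)\le 3(1+\diam Z)\}$, a dilation proportional to $\diam Z$ rather than a fixed amount; and (ii) it runs a \emph{non-monotone} algorithm (\cref{algo}) that, upon encountering a new set $Z_{p_{i_t}}$ which conflicts with previously kept ones, \emph{removes} the conflicting old sets. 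The geometric Claim \cref{claim:non-dec} then shows that the new $\bar Z_{p_{i_t}}$ necessarily contains the removed $\bar Z_{p_j}$'s (because a conflicting $Z_{p_j}$ whose bar failed to reach $p_{i_t}$ must be much smaller than $Z_{p_{i_t}}$), so coverage never shrinks. That argument has no analogue in your sketch.

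The second problem is that your bound $\prod_{i\in J}\epsilon^{|Z_i|/C}\le\epsilon^{|J|/C}$ throws away precisely the information you need. After thinning, $|J|$ may be $O(1)$ even when $n$ is huge --- e.g.\ a single set of cardinality $\sim n/C$ can absorb the entire sparse sub-path --- and then $\epsilon^{|J|/C}$ is not small. The quantity to track is $\sum_{i\in J}|Z_i|$, and the crucial estimate the paper derives (its \cref{eq:n:bound}) is that the disjoint ``chain'' $(V_j,\gamma_j)$ produced by the algorithm, which covers a stretch of length $\ge n$ with gaps at most $k$, must satisfy $\sum_j|V_j|\ge n/(7(C+k))$, combining the diameter bound $\diam V_j\le C|V_j|$ with $|V_j|\ge 1$. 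Correspondingly, the union bound should be indexed by chains of disjoint decorated sets with a constraint on $\sum|V_j|$ (paper's \cref{eq:sum:mn}), not by sparse sub-paths with a constraint on $m$; otherwise the entropy factor $(C'k')^{Dm}$ and the probability gain $\epsilon^{\sum s_j/(C\cdot\mathrm{mult})}$ are not controlled by comparable quantities (``mult'' still depends on the $s_j$'s, which you cannot simply ``absorb into a constant''). Once you enumerate chains by distinguished points plus condition~\ref{cond:2} and use $\sum|V_j|\ge n/(7(C+k))$, the rest really is bookkeeping and lands on the claimed exponent.
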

Note that the second condition implies that $\Gamma_Z$ is finite for every finite $Z$. {\Cref{prop:set:systems} will be proved in \cref{subsec:proof:set:systems}, but before that, let us provide a few applications to make the abstract setting more concrete.}

\subsection{Applications}
While it is not hard to imagine examples of decorated set systems satisfying the conditions of \cref{prop:set:systems}, let us give a more explicit toy example to get used to the notion before turning to more interesting applications based on Toom contours and variants thereof.
\begin{example}
Consider a field of i.i.d.\ Bernoulli random variables $\xi_x$ for $x\in \bbZ^{D}$. For each finite non-empty $Z\subset \bbZ^{D}$ the set of decorations $\Gamma_Z$ is empty if $Z$ is not a 1-connected path and a singleton otherwise. The event $E(Z,\gamma)$ corresponds to $\bigcap_{x\in Z}\{\xi_x=1\}$. Then \cref{eq:negative:association} is satisfied by independence. Condition \ref{cond:1} of \cref{prop:set:systems} follows by 1-connectedness. Condition \ref{cond:2} holds, because one can encode a 1-connected path by the sequence of its increments, so the number of paths containing $z\in\bbZ^{D}$ of cardinality $n$ is at most $n(2D)^n$. Condition \ref{cond:3} is also verified, since $\bbP(E(Z,\gamma))=(\bbP(\xi_0=1))^{|Z|}$. Hence, if the Bernoulli variables have a sufficiently small parameter, \cref{prop:set:systems} yields an exponentially small bound on the probability that one can find a $k$-connected path starting at 0 such that each of its points belongs to a $1$-connected path with all Bernoulli variables equal to 1. Thus, in this case this degenerates into looking for a $k$-connected path in the set of sites with Bernoulli variable equal to 1, so the conclusion of the proposition is a classical fact.
\end{example}

The next corollary will be used to control the $\cU_0$-BP with death we recovered in \cref{cor:CP-BP-comparison}. We formulate it more generally for cellular automata with death.
\begin{corollary}
\label{cor:Toom}
Let $k$ be a positive real number and let the map $\phi$ define a CA which is attractive and an eroder, as defined in \cref{subsec:death}. Then there exist $c>0$ and $\delta_0>0$ such that for all $\delta\in(0,\delta_0]$ the {$\phi$-CA} $\omega^{\bar\nu}$ with  death parameter $\delta$ and with initial condition given by its upper invariant measure $\bar\nu$ satisfies the following. Denote by $A_{x,t}$ the maximal $k$-connected component containing $(x,t)\in\bbZ^{d+1}$ with $\omega^{\bar\nu}_a(s)=0$ for all $(a,s)\in A_{x,t}$. It holds that
\[\bbP\left(\diam(A_{x,t})\ge \ell\right)\le\delta^{c(\ell+1)}.\]
\end{corollary}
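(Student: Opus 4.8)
The plan is to deduce \cref{cor:Toom} from \cref{prop:set:systems} by exhibiting an appropriate decorated set system built out of \emph{Toom contours} for the CA $\phi$ with death. The starting point is the Toom contour machinery (as in \cite{Toom80,Swart22}): since $\phi$ is attractive and an eroder, there is $\delta_0>0$ and a notion of contour such that, whenever a point $(x,t)$ has $\omega^{\bar\nu}_x(t)=0$, there exists a Toom contour rooted at $(x,t)$ all of whose ``defects'' are death marks; moreover the probability that a fixed contour of combinatorial size $n$ is present is at most $\delta^{cn}$ for some constant $c>0$ (because it forces at least order $n$ independent death events), and the number of contours of size $n$ rooted at a given point grows at most exponentially in $n$. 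I would package this as follows: set $D=d+1$; for a bounded nonempty $Z\subset\bbZ^{D}$ let $\Gamma_Z$ be the (finite) set of Toom contours whose vertex set is exactly $Z$ (empty if no such contour exists), and let $E(Z,\gamma)$ be the event that every defect of the contour $\gamma$ is a death mark. Independence of distinct death marks, together with the fact that disjoint $Z_i$'s force disjoint sets of death-mark constraints, gives \eqref{eq:negative:association} (in fact with equality). Condition \ref{cond:2} is the exponential-count bound on contours through a point; condition \ref{cond:3} is the $\delta^{cn}$ probability bound; condition \ref{cond:1}, $\diam(Z)\le C|Z|$, holds because a contour is a connected union of bounded-length edges, so its diameter is at most a constant times its number of vertices.

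Next I would argue that the event $\{\omega^{\bar\nu}_a(s)=0\ \forall (a,s)\in A_{x,t}\}$ with $\diam(A_{x,t})\ge\ell$ implies the event in the conclusion of \cref{prop:set:systems}. Fix $y\in A_{x,t}$ with $d((x,t),y)\ge\ell$. Because $A_{x,t}$ is $k$-connected, there is a $k$-connected path $P$ inside $A_{x,t}$ from $(x,t)$ to $y$. Every point $p=(a,s)\in P$ has $\omega^{\bar\nu}_a(s)=0$, so by the Toom-contour statement there is a contour $\gamma_p$ rooted at $p$ with all defects being death marks; taking $Z_p$ to be its vertex set, we have $p\in Z_p$ and $E(Z_p,\gamma_p)$ occurs. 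Hence \cref{prop:set:systems} applies with $n=\ell$ and this value of $k$, giving $\bbP(\diam(A_{x,t})\ge\ell)\le\epsilon^{\ell/(7C+7k)^2}$; absorbing the fixed exponent $1/(7C+7k)^2$ and replacing $\epsilon$ by $\delta$ (which we may, after shrinking $\delta_0$, since condition \ref{cond:3} needs $\delta^{c}\le\epsilon^{1/C}$, i.e.\ $\delta$ small) yields a bound of the form $\delta^{c'(\ell+1)}$; the ``$+1$'' is harmless and handles $\ell$ small. One subtlety to check: \cref{prop:set:systems} requires $\epsilon$ small depending on $C$, so one first fixes $C$ from the contour geometry and counting bounds, then fixes $\epsilon$ small enough, then fixes $\delta_0$ so that $\delta\le\delta_0$ forces $\delta^{c}\le\epsilon^{1/C}$.

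The main obstacle is importing and stating precisely the Toom-contour input in the \emph{death} setting and with the \emph{upper invariant measure} as initial condition: one needs that a $0$ at $(x,t)$ in $\omega^{\bar\nu}$ is always ``explained'' by a contour of defects that are genuine (independent) death marks, rather than possibly being inherited from the initial condition. This is exactly the point where attractiveness and the eroder property enter—attractiveness lets one compare with the all-$1$ start, and starting from $\bar\nu$ (the $t\to\infty$ limit from $\bone$) means, informally, that the process has been running from $\bone$ since $t=-\infty$, so there is no ``initial-condition defect'' to worry about and every $0$ traces back through finitely many death marks by the eroder/Toom argument. Making this rigorous (either by a limiting argument from finite-time, finite-volume versions, or by directly invoking the contour representation of \cite{Swart22} adapted to death) is the part that requires care; the rest is a routine verification of the three numerical conditions of \cref{prop:set:systems} followed by the union-type deduction above.
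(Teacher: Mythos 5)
Your proposal is correct and follows essentially the same route as the paper: apply \cref{prop:set:systems} to the decorated set system of Toom contours (with decoration the contour, event the presence of death marks at all sinks), verify the three conditions from the contour geometry and counting bounds, and extract a $k$-connected path of roots from $A_{x,t}$. The ``main obstacle'' you flag at the end is handled in the paper simply by invoking \cite{Swart22}*{Theorem 7}, which asserts precisely that $\omega_x^{\bar\nu}(t)=0$ implies the presence of a finite Toom contour rooted at $(x,t)$, so no additional limiting argument is needed.
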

\begin{remark}
Let us note that \cref{cor:Toom} provides a much more straightforward and general proof of several of the main results of \cite{Poncelet13} (see Theorems 5, 6 and 7 there).
\end{remark}
\begin{proof}[Proof of \cref{cor:Toom}]
\Cref{cor:Toom} follows directly from \cref{prop:set:systems}, applied to the decorated set system given by Toom contours \cites{Toom80,Swart22} and their presence. Since these notions are rather technical to define, while the details of the definition are irrelevant, let us instead highlight the high level viewpoint, referring to \cite{Swart22} for more details (the facts we will need were actually already known since \cite{Toom80}). 

A Toom contour rooted at $z\in\bbZ^{d+1}$ consists of a finite set $Z\subset \bbZ^{d+1}$ (with $z\in Z$) equipped with a complicated decoration taking the form of a connected coloured oriented multigraph with vertex set $Z$, such that the endpoints of each edge are at most at some bounded mutual distance (depending on the support of $\phi$). Connectedness readily implies condition \ref{cond:1}.

In each Toom contour one can identify a set $Z_*\subset Z$ of sinks. A contour is said to be present if $\xi_{x,t}=1$ (recall \cref{subsec:death}) for each sink $(x,t)$, so that disjoint decorated sets occur independently. In particular, Toom contours form a decorated set system and their probability of occurrence is $\delta^{|Z_*|}$. A key and highly non-trivial fact \cite{Swart22}*{Theorem 7} is that if $\omega_x^{\bar\nu}(t)=0$, then some non-empty finite Toom contour rooted at $(x,t)$ occurs. Moreover, \cite{Swart22}*{Lemma 13} ensures that the number of edges (and, therefore, the number of vertices by connectedness) of a Toom contour is at most a constant multiple of $|Z_*|$, thus proving condition \ref{cond:3} . Finally, \cite{Swart22}*{Lemma 14} shows that the number of Toom contours containing a given point and  with $N$ edges is at most exponential in $N$ and, therefore, in the number of vertices by the previous result, so condition \ref{cond:2} is also satisfied.
\end{proof}
The next corollary will not be used in the proof of our main results, but we include it, since it is of independent interest and follows analogously.
\begin{corollary}
\label{cor:BP}
Consider $\cU'$-BP $\omega$ with subcritical $\cU'$. Fix $p>0$ and let $C_0$ be the $k$-connected component containing the origin in $\{x\in\bbZ^d:\lim_{t\to\infty}\omega_x^{\mu_p}(t)=1\}$. Then there exists $c=c(\cU',k)>0$ such that for all $p>0$ small enough and $\ell\ge 0$ we have
\[\bbP(\diam(C_0)\ge \ell)\le p^{c(\ell+1)}.\]
\end{corollary}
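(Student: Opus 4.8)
The plan is to derive \cref{cor:BP} from \cref{prop:set:systems} exactly as \cref{cor:Toom} was, with minimal certificates of infection playing the role of Toom contours. Write $S_0=\{x\in\bbZ^d:\omega^{\mu_p}_x(0)=1\}$ and, for finite $S\subset\bbZ^d$, let $[S]$ denote its $\cU'$-closure, which is finite because $\cU'$ is subcritical. A site $y$ is eventually infected precisely when $y\in[S_0]$, and then there is a \emph{minimal} $S\subseteq S_0$ with $y\in[S]$. To such a pair we attach the decorated set $Z=[S]$ with decoration $\gamma=(S,y)$, put $Z_*:=S$, and let $E(Z,\gamma):=\{Z_*\subseteq S_0\}$, so that $\bbP(E(Z,\gamma))=p^{|Z_*|}$; since $Z_*\subseteq Z$, disjoint decorated sets have disjoint $Z_*$ and \eqref{eq:negative:association} holds with equality. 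By construction, some decorated set rooted at $y$ occurs whenever $y$ is eventually infected, which is all the ``coverage'' we need.

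It then remains to check conditions \ref{cond:1}--\ref{cond:3} of \cref{prop:set:systems} with $\epsilon$ a fixed multiple of $p$. For condition \ref{cond:1}, I would first show that the closure $Z=[S]$ of a minimal certificate is $k_0$-connected for some $k_0=k_0(\cU')$ of order $\|\cU'\|$: were $[S]$ to split into two parts at mutual distance exceeding $2\|\cU'\|+\max_{U\in\cU'}\diam(U)$, an induction on infection times would show that the two parts evolve independently under $\cU'$-bootstrap percolation, so the part containing $y$ would equal the closure of the sub-certificate it contains, contradicting minimality; hence $\diam(Z)\le k_0|Z|$. Condition \ref{cond:2} holds because there are at most $C_0^{\,n}$ $k_0$-connected sets $Z$ of size $n$ through a fixed point, and for each at most $2^n$ choices of $S\subseteq Z$ and $n$ of $y\in Z$. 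The crux is condition \ref{cond:3}, which here reads $p^{|S|}\le\epsilon^{|[S]|/C}$ and therefore requires minimal certificates to be \emph{thin}: $|[S]|\le C_{\cU'}|S|$. This is exactly where subcriticality is indispensable --- because every open hemisphere of directions contains a stable direction, stable half-spaces remain stable under the dynamics, which keeps $[S]$ close to $S$ and rules out the ``droplet''-type super-linear growth (a bounded-width seed filling a polynomially larger region) characteristic of critical and supercritical families. I expect turning the slogan ``subcritical $=$ linear growth'' into the clean bound $|[S]|\le C_{\cU'}|S|$ for minimal $S$ to be the main obstacle; it plays here the role of \cite{Swart22}*{Lemma 13} in the proof of \cref{cor:Toom}.

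Granting conditions \ref{cond:1}--\ref{cond:3}, \cref{prop:set:systems} bounds by $\epsilon^{\,n/(7C+7k)^2}$ the probability that the origin is the endpoint of a $k$-connected path reaching Euclidean distance $n$ all of whose vertices lie in occurring decorated sets. If $\diam(C_0)\ge\ell$, then --- the origin lying in $C_0$, $C_0$ being $k$-connected, and each of its vertices being eventually infected hence in some occurring decorated set --- such a path exists with $n\ge\ell/2$. Absorbing the multiplicative constant relating $\epsilon$ and $p$ (legitimate for $p$ small) and choosing $c=c(\cU',k)>0$ small enough yields $\bbP(\diam(C_0)\ge\ell)\le p^{c(\ell+1)}$ for all $\ell$ exceeding a constant; the remaining finitely many small values of $\ell$ are immediate from a direct union bound $\bbP(0\in[S_0])\le\sum_{(Z,\gamma):0\in Z}\bbP(E(Z,\gamma))=O(p^{c'})$, again using conditions \ref{cond:2} and \ref{cond:3}.
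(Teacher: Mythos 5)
Your plan has the right shape --- like the paper's proof, it invokes \cref{prop:set:systems} on a decorated set system whose occurring elements witness eventual infection --- and your checks of conditions \ref{cond:1} (the closure $[S]$ of a minimal certificate is $O(\|\cU'\|)$-connected, by the independence-of-far-components argument) and \ref{cond:2} (exponential counting) are fine. The substantive difference from the paper is the choice of decorated set system: you take minimal certificates $S$ together with their closures $Z=[S]$, whereas the paper uses the space embeddings of \emph{shattered Toom contours} from \cite{Hartarsky22Toom}, combinatorial objects abstracted from Toom's stability proof. Your system is more elementary to state, but it leaves condition \ref{cond:3} resting on the unproved bound $|[S]|\le C_{\cU'}|S|$ for minimal certificates, and that is exactly where the entire difficulty of the corollary lives.

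You flag this as ``the main obstacle'' and hope it follows from the slogan ``subcritical $=$ linear growth''; it does not simply fall out. That slogan is itself a theorem, and it is precisely the hard combinatorial input that the paper imports ready-made by changing the decorated set system: \cite{Hartarsky22Toom}*{Corollary 4.3} gives coverage (eventual infection of $x$ forces a non-empty shattered contour rooted at $x$ to occur), \cite{Hartarsky22Toom}*{Lemma 5.2} bounds the size $|Z|$ linearly in the number of sinks $|Z_*|$ (the analogue of your $|[S]|\le C|S|$), and \cite{Hartarsky22Toom}*{Lemma 5.3} gives the exponential count rooted at a point. None of these is an elementary closure property of subcritical $\cU'$; they are obtained by adapting Toom's argument to bootstrap percolation, and a short ``induction on infection times using stable half-spaces'' will not produce the thinness bound you need, nor is it even clear that it holds for your minimal certificates without routing through equivalent machinery. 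So as written the proposal is incomplete at its central step. To close it you would either have to prove the thinness of minimal certificates directly --- which, as far as one can tell, amounts to redeveloping the shattered-contour analysis --- or switch to the paper's decorated set system and invoke the cited results.
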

We recall that non-subcritical models are exactly those with $\qc[BP]=0$ \cites{Balister24,BalisterNaNb}, so $C_0=\bbZ^d$ almost surely for all $p>0$ and it is meaningless to consider them in the above sense.
\begin{remark}
\Cref{cor:BP} provides a positive answer to \cite{Balister16}*{Question 12} in the perturbative regime. As pointed out in \cite{Hartarsky21}*{Section 7.1.2}, the stated exponential decay cannot hold for all $p<\qc[BP]$ in general. Nevertheless, in the spirit of \cite{AlvesNaN} one could expect that it does hold up to a different critical threshold $p_{\mathrm{c}}(k)\le \qc[BP]$, past which the diameter is a.s.\ infinite. On a different note, \cref{cor:BP} was proved by more classical means in \cite{Blanquicett20}*{Theorem 4.2} for \emph{trivial} subcritical update families in two dimensions for $k=1$. Thus, \Cref{cor:BP} vastly generalises this result and solves \cite{Blanquicett20}*{Problem~6.1}.
\end{remark}

\begin{proof}[Proof of \cref{cor:BP}]\Cref{cor:BP} follows from {\cref{prop:set:systems}} essentially along the same lines as \cref{cor:Toom} follows from \cref{prop:set:systems}, but using a different decorated set system. Namely, we consider the space embeddings of shattered contours, which are the central object of study in \cite{Hartarsky22Toom}\footnote{Note that in \cite{Hartarsky22Toom} the roles of 0 and 1 are exchanged with respect to the present work.}, similarly to Toom contours in \cite{Swart22}. Again, the definition of these objects, which are projections of equivalence classes of Toom contours, is rather technical and unimportant for us, so we refer to \cite{Hartarsky22Toom} for those details. Instead, let us indicate that the presence of a finite non-empty shattered contour rooted at $x\in\bbZ^d$ is implied by $\lim_{t\to\infty}\omega_x^{\mu_p}(t)=1$ \cite{Hartarsky22Toom}*{Corollary 4.3}; their numbers of edges and vertices are bounded by a constant multiple of the number of sinks \cite{Hartarsky22Toom}*{Lemma 5.2}; the number of shattered Toom contours rooted at a given point is at most exponentially large in the number of sinks \cite{Hartarsky22Toom}*{Lemma 5.3}.
\end{proof}

Thus, our only remaining task in this section is to prove \cref{prop:set:systems}. Before doing so, let us mention a natural question closely related to \cref{cor:Toom}.
\begin{question}
Fix $\varepsilon>0$. Is it true that for the $\cU_0$-BP with $\delta$ death, with $\delta>0$ small enough, the upper invariant measure stochastically dominates $\mu_{1-\varepsilon}$? More generally, is this true for any attractive eroder?
\end{question}

\subsection{Proof of Proposition~\ref{prop:set:systems}}
\label{subsec:proof:set:systems}
We fix a decorated set system and $C$, $\epsilon$, $n$, $k$, and $x$ as in \cref{prop:set:systems}. Since our sets can be quite fuzzy we begin by regularising them.
\begin{definition}
\label{def:bar}
Given a finite non-empty $Z\subset\bbZ^d$, we set 
\[\bar Z:=\left\{x\in\bbZ^d:d(x,Z)\le 3(1+\diam(Z))\right\}.\]
\end{definition}
Let us fix a $k$-connected path $P=(p_0=x,p_1,\dots,p_l=y)$ with $d(x,y)\ge n$. We further fix decorated sets $(Z_p,\gamma_p)$ with $p\in Z_p$ for each $p\in P$. We next run the following algorithm.

\begin{algorithm}
\label{algo}
Define $i_0=0$, $I_0=\{0\}$ and $X_0=\bar Z_{p_0}$ and initialise $t=0$. While $P\not\subset X_t$, repeat the following, then return $(I_t,X_t,t)$. Increment $t$ by setting $t:=t+1$. Set $i_{t}=\min\{j\le l:p_j\not\in X_{t-1}\}$. Let $J_{t}=\{j\in I_{t-1}: Z_{p_j}\cap Z_{p_{i_t}}\neq \varnothing\}$. Set $I_{t}=\{i_t\}\cup (I_{t-1}\setminus J_t)$. Set $X_t=\bigcup_{j\in I_t}\bar Z_{p_j}$.
\end{algorithm}
By definition, if the algorithm terminates and outputs $(I_t,X_t,t)$, then $P\subset X_t$. Moreover, by induction we have that for all $t'\le t$ and $a,b\in I_{t'}$ with $a\neq b$, it holds that 
\begin{equation}
\label{eq:Z:disjointness}
Z_{p_a}\cap Z_{p_b}=\varnothing,\end{equation}
using the definition of $J_{t'}$. To see that \cref{algo} terminates, it suffices to see that $X_{t'}\cap P$ is strictly increasing in $t'$, since $P$ is finite (on the other hand, we note that $I_{t'}$ is not necessarily monotone in $t'$). In order to prove this, we first show that $X_{t'}\cap P$ is non-decreasing and then exhibit an element which is in $X_{t'}\cap P$, but not in $X_{t'-1}\cap P$. 

\begin{claim}
\label{claim:non-dec}
For any $t'\in\{1,\dots,t\}$ we have $X_{t'}\cap P\supset X_{t'-1}\cap P$.
\end{claim}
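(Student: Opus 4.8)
\textbf{Proof proposal for Claim~\ref{claim:non-dec}.}

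The plan is to unwind one step of \cref{algo} and show that no element of $P$ that was already covered can be dropped. Fix $t'\in\{1,\dots,t\}$ and let $q\in X_{t'-1}\cap P$, so $q\in\bar Z_{p_j}$ for some $j\in I_{t'-1}$. I want to show $q\in X_{t'}$. The only way $q$ could leave is if $j$ is removed from $I_{t'-1}$, i.e.\ $j\in J_{t'}$, which by definition means $Z_{p_j}\cap Z_{p_{i_{t'}}}\neq\varnothing$. In that case, however, $i_{t'}\in I_{t'}$, so it suffices to prove $\bar Z_{p_j}\subset \bar Z_{p_{i_{t'}}}$ whenever $Z_{p_j}\cap Z_{p_{i_{t'}}}\neq\varnothing$; then $q\in\bar Z_{p_j}\subset\bar Z_{p_{i_{t'}}}\subset X_{t'}$ and we are done.

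So the crux is the following deterministic inclusion: if two decorated sets $Z,Z'$ intersect, then $\bar Z\subset \overline{Z'}$ \emph{provided} $|Z|\le |Z'|$ — and this size comparison is exactly what the algorithm guarantees, because $i_{t'}=\min\{k\le l:p_k\notin X_{t'-1}\}$ and $j$ is an index with $p_j\in X_{t'-1}$, so $p_j$ was reached before $p_{i_{t'}}$ along $P$; I will need to check that being reached earlier along the $k$-connected path forces $|Z_{p_j}|$ to be comparable to (in fact, I will arrange, at most a bounded multiple of) $|Z_{p_{i_{t'}}}|$, or else adjust \cref{def:bar} so that the fattening radius absorbs this. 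Concretely, using $x\in Z_x$ for every $p$, pick $w\in Z\cap Z'$. For any $a\in\bar Z$ we have $d(a,Z)\le 3(1+\diam(Z))$, hence $d(a,w)\le d(a,Z)+\diam(Z)\le 4(1+\diam(Z))$, and by condition~\ref{cond:1} of \cref{prop:set:systems}, $\diam(Z)\le C|Z|$. To land in $\overline{Z'}$ we need $d(a,Z')\le d(a,w)\le 4(1+C|Z|)$ to be at most $3(1+\diam(Z'))$, which holds once $\diam(Z')$ is not too small relative to $|Z|$; the factor $3$ in \cref{def:bar} is chosen with exactly this slack in mind, and the key point is that $i_{t'}$ being the \emph{first} uncovered index while $p_j$ is covered means $Z_{p_{i_{t'}}}$ cannot be much smaller than $Z_{p_j}$ (otherwise $\bar Z_{p_j}$ would already have swallowed $p_{i_{t'}}$ at an earlier stage, contradicting minimality).

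The main obstacle is pinning down this size/ordering relationship cleanly: one must argue that the elements removed from $I_{t'-1}$ at step $t'$ correspond to decorated sets \emph{no larger} (up to the built-in constant) than $Z_{p_{i_{t'}}}$, so that the fattened copy $\bar Z_{p_{i_{t'}}}$ covers all the fattened copies it destroys. Once that monotonicity-of-coverage is established, Claim~\ref{claim:non-dec} follows immediately by the chain $q\in\bar Z_{p_j}\subset\bar Z_{p_{i_{t'}}}\subset X_{t'}$ for destroyed $j$, and trivially $q\in\bar Z_{p_j}\subset X_{t'}$ for surviving $j$, so $X_{t'-1}\cap P\subset X_{t'}\cap P$ as claimed. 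I would then record, for use in the subsequent termination argument, that $p_{i_{t'}}\in X_{t'}\cap P\setminus X_{t'-1}$, giving strict growth.
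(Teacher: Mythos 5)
Your reduction is correct up to the point where you identify the target inclusion $\bar Z_{p_j}\subset \bar Z_{p_{i_{t'}}}$ for $j\in J_{t'}$, which is exactly what the paper proves. But the mechanism you propose for it is wrong, and you never actually close the gap you flag yourself.

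Concretely: the ``deterministic inclusion'' you state --- if $Z\cap Z'\neq\varnothing$ and $|Z|\le |Z'|$ then $\bar Z\subset\overline{Z'}$ --- is false. Condition~\ref{cond:1} of \cref{prop:set:systems} gives only an upper bound $\diam(Z)\le C|Z|$, not a lower bound, so a set of large cardinality can have small diameter and hence a small fattening. (Take $Z$ a long $1$-connected segment and $Z'$ a denser blob of larger cardinality but much smaller diameter; the two can intersect, yet $\bar Z\not\subset\overline{Z'}$.) So cardinality comparison is the wrong invariant; what matters is $\diam$. You also appeal to ``minimality'' of $i_{t'}$ to get the size ordering, but minimality plays no role here: the relevant fact is simply that $p_{i_{t'}}\notin X_{t'-1}$, and since $j\in I_{t'-1}$ means $\bar Z_{p_j}\subset X_{t'-1}$, we get $p_{i_{t'}}\notin \bar Z_{p_j}$. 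You never use this. Finally, you yourself acknowledge the step is unproved (``or else adjust \cref{def:bar} so that the fattening radius absorbs this'' and ``I will need to check\dots'').

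The actual argument, which is short, is this: fix $j\in J_{t'}\subset I_{t'-1}$. Since $Z_{p_j}\cap Z_{p_{i_{t'}}}\neq\varnothing$, $p_{i_{t'}}\in Z_{p_{i_{t'}}}$ and $p_j\in Z_{p_j}$, the triangle inequality gives $\diam(Z_{p_{i_{t'}}})\ge d(p_{i_{t'}},p_j)-\diam(Z_{p_j})$. Now $p_{i_{t'}}\notin \bar Z_{p_j}$ (because $\bar Z_{p_j}\subset X_{t'-1}$ while $p_{i_{t'}}\notin X_{t'-1}$), so $d(p_{i_{t'}},p_j)> 3(1+\diam(Z_{p_j}))$. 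Plugging in yields $\diam(Z_{p_{i_{t'}}})> 3+2\diam(Z_{p_j})$, and then for any $a\in\bar Z_{p_j}$, passing through a point of $Z_{p_j}\cap Z_{p_{i_{t'}}}$ gives $d(a,Z_{p_{i_{t'}}})\le 3(1+\diam(Z_{p_j}))+\diam(Z_{p_j})=3+4\diam(Z_{p_j})\le 3(1+\diam(Z_{p_{i_{t'}}}))$, i.e.\ $a\in\bar Z_{p_{i_{t'}}}$. This uses neither cardinality nor condition~\ref{cond:1}, and the constant $3$ in \cref{def:bar} is chosen precisely to make these two triangle-inequality steps close.
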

\begin{proof}
By the definitions of $X_{t'}$ and $I_{t'}$, we have that 
\begin{equation}
\label{eq:barZ}
X_{t'-1}\setminus X_{t'}\subset\bigcup_{j\in J_{t'}}\bar Z_{p_j}\setminus\bar Z_{p_{i_{t'}}}.
\end{equation}
Thus, it remains to show that $\bar Z_{p_{i_{t'}}}\supset\bar Z_{p_j}$ for all $j\in J_{t'}$.

Fix $j\in J_{t'}\subset I_{t'-1}$, so $Z_{p_j}\cap Z_{p_{i_{t'}}}\neq \varnothing$ by the definition of $J_{t'}$. Then
\[\diam\left(Z_{p_{i_{t'}}}\right)\ge d\left(p_{i_{t'}},p_j\right)-\diam\left(Z_{p_j}\right)\ge d\left(p_{i_{t'}},p_j\right)/3+2+\diam\left(Z_{p_j}\right),\]
where we first used the triangle inequality, then the fact that $p_{i_{t'}}\not\in \bar Z_{{p}_j}$ by definition of $i_{t'}$ and $X_{t'-1}$. But then $\bar Z_{p_{i_{t'}}}\supset \bar Z_{p_j}$ by the triangle inequality and we are done.
\end{proof}
We next claim that for any $t'\in\{1,\dots,t\}$ we have 
\begin{equation}
\label{claim:strict}
p_{i_{t'}}\in X_{t'}\setminus X_{t'-1}=\bar Z_{p_{i_{t'}}}\setminus \bigcup_{j\in I_{t'-1}}\bar Z_{p_j}.\end{equation}
Indeed, if $p_{i_{t'}}\in \bar Z_{p_j}$ for some $j\in  I_{t'-1}$, that would imply $p_{i_{t'}}\in X_{t'-1}$ by the definition of $X_{t'-1}$, but this contradicts the definition of $i_{t'}$. 

Combining \cref{claim:non-dec,claim:strict}, we get that \cref{algo} does terminate, so $I_t,X_t,t$ are well defined.
\begin{claim}
\label{claim:connected}
$X_t$ is $k$-connected.
\end{claim}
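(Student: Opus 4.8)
The plan is to reduce the $k$-connectivity of $X_t$ to two ingredients: each fattened set $\bar Z_{p_j}$ with $j\in I_t$ is connected in a strong sense, and the path $P$ — which, by the termination rule of \cref{algo}, is contained in $X_t$ — serves as a backbone gluing these sets together.

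First I would isolate the auxiliary fact that for \emph{any} bounded non-empty $Z\subset\bbZ^D$, the set $\bar Z$ of \cref{def:bar} is $1$-connected, hence $k$-connected since $k\ge 1$; note that $Z$ itself need not be connected, so this genuinely requires the fattening. Writing $r=3(1+\diam(Z))\ge 3$, any $w\in\bar Z$ has a witness $z\in Z$ with $d(w,z)\le r$, and one joins $w$ to $z$ by a nearest-neighbour lattice path that corrects the coordinates of $w$ to those of $z$ one at a time; along such a path the Euclidean distance to $z$ is non-increasing, so every intermediate vertex stays within distance $r$ of $z$, i.e.\ in $\bar Z$. The same coordinate-by-coordinate move joins any two points of $Z$ inside $\bar Z$, since they lie at distance $\le\diam(Z)<r$. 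Thus $\bar Z$ is $1$-connected. The only quantitative input here is that the fattening radius is both $\ge 1$ and strictly larger than $\diam(Z)$, which is precisely the role of the constant $3$ in \cref{def:bar}.

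Then I would conclude as follows. Recall $X_t=\bigcup_{j\in I_t}\bar Z_{p_j}$ with $I_t\subset\{0,\dots,l\}$, that $p_j\in Z_{p_j}\subset\bar Z_{p_j}$ for each $j\in I_t$, and that $P\subset X_t$ upon termination. Given arbitrary $a,b\in X_t$, choose $j,j'\in I_t$ with $a\in\bar Z_{p_j}$ and $b\in\bar Z_{p_{j'}}$. By the auxiliary fact, $a$ is $k$-connected to $p_j$ within $\bar Z_{p_j}\subset X_t$ and $p_{j'}$ is $k$-connected to $b$ within $\bar Z_{p_{j'}}\subset X_t$; moreover the sub-path of $P$ between $p_j$ and $p_{j'}$ has consecutive vertices at distance $\le k$ and lies entirely in $P\subset X_t$, so it $k$-connects $p_j$ to $p_{j'}$ inside $X_t$. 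Concatenating the three pieces produces a $k$-connected path from $a$ to $b$ in $X_t$, which is the claim.

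I do not anticipate a real obstacle: once the first step is in place the rest is bookkeeping, and in particular it does not even use the disjointness $Z_a\cap Z_b=\varnothing$ for $a\neq b\in I_t$ established earlier in the text. The only point requiring a little care is verifying that $\bar Z$ is connected at all — since $Z$ need not be — which is why I would state and prove that as a standalone lemma before invoking it.
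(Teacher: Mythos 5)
Your proof is correct but takes a genuinely different, and arguably cleaner, route than the paper's. The paper argues by induction on $t'$, showing each intermediate $X_{t'}$ is $k$-connected: it combines the structural identity $X_{t'}=X_{t'-1}\cup\bar Z_{p_{i_{t'}}}$ (extracted from \cref{claim:non-dec} and \cref{claim:strict}) with the observation that the newly adjoined fattened set lies within distance $k$ of $X_{t'-1}$, because $p_{i_{t'}-1}\in X_{t'-1}$ and $d(p_{i_{t'}},p_{i_{t'}-1})\le k$. You instead exploit only the termination guarantee of \cref{algo}, namely $P\subset X_t$, and use the path $P$ itself as a $k$-connected backbone passing through each $\bar Z_{p_j}$ via the representative $p_j\in Z_{p_j}\subset\bar Z_{p_j}$; any two points of $X_t$ are then routed through $P$. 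This avoids re-opening the algorithmic induction and is more modular (though it proves only connectivity of the final $X_t$, whereas the paper's induction establishes it for every intermediate $X_{t'}$ as a byproduct). You were also right to spell out that $\bar Z$ is $1$-connected — the paper takes this for granted in its base case — and your coordinate-correcting argument is sound: the fattening radius $3(1+\diam Z)$ exceeds both $1$ and $\diam(Z)$, which is precisely what keeps the witness paths inside $\bar Z$.
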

\begin{proof}
We prove by induction that $X_{t'}$ is $k$-connected for all $t'\in \{0,\dots,t\}$. The base follows since $X_0=\bar Z_{p_0}$ is $k$-connected, by \cref{def:bar,def:kconnected} and $k\ge 1$. By \cref{claim:strict}, $X_{t'}\setminus X_{t'-1}\subset \bar Z_{p_{i_{t'}}}\subset X_{t'}$, the last inclusion using the definition of $X_{t'}$ and $I_{t'}$. Moreover, by the proof of \cref{claim:non-dec} $X_{t'}\supset X_{t'-1}$. Thus, since $\bar Z_{p_{i_{t'}}}$ is $k$-connected, it remains to show that $d(X_{t'-1},\bar Z_{p_{i_{t'}}})\le k$. In fact, the stronger statement $d(p_{i_{t'}},X_{t'-1})\le k$ holds, because the definition of $i_{t'}$ gives $p_{i_{t'}-1}\in X_{t'-1}$, and $i_{t'}\neq 0$ for $t'>0$, since $X_{t'-1}\supset X_0=\bar Z_{p_0}\ni p_0$ and because consecutive $p_j$ are at distance at most $k$.
\end{proof}
\begin{definition}[Chain]
\label{def:chains}
A \emph{chain starting at $x$ of length at least $n$} is a sequence of disjoint decorated sets $(V_j,\gamma_j)_{j=1}^m$ such that $d(\bar V_j,\bar V_{j+1})\le k$ for all $j\le m-1$, $x\in \bar V_1$ and there exists $y\in \bar V_m$, such that $d(x,y)\ge n$. 
\end{definition}

By \cref{claim:connected,eq:Z:disjointness} we can extract from $I_t$ a sequence $i'_1,\dots,i'_m$ such that $(V_j,\gamma_j)_{j=1}^m$ is a chain starting at $x$ of length at least $n$, where $V_j=Z_{p_{i'_j}}$ and $\gamma_j=\gamma_{p_{i'_j}}$ for $j\in\{1,\dots,m\}$. Thus, recalling $\cE(x,n,k)$ from \cref{prop:set:systems}, we have proved the following.
\begin{lemma}
\label{lem:chains}
If $\cE(x,n,k)$ occurs, then there exists a chain $(V_j,\gamma_j)_{j=1}^m$ starting at $x$ of length at least $n$ such that $\bigcap_{j=1}^mE(V_j,\gamma_j)$ occurs.
\end{lemma}

We are now ready to conclude the proof of \cref{prop:set:systems} by a union bound over all such chains, since their decorated sets are disjoint, so the events involved are negatively correlated. 
\begin{proof}[Proof of \cref{prop:set:systems}]
By \cref{lem:chains,def:chains,def:decorated:set:systems},
\begin{equation}
\label{eq:proba:bound}\bbP\left(\cE(x,n,k)\right)\le\sum_{(V_j,\gamma_j)_{j=1}^m}\prod_{j=1}^m\epsilon^{|V_j|/C},\end{equation}
where the sum is over all chains starting at $x$ of length at least $n$. Observe that by condition \ref{cond:1} of \cref{prop:set:systems}, we have 
\begin{equation}
\label{eq:n:bound}n\le \sum_{i=1}^m\left(k+\diam\left(\bar V_i\right)\right)\le mk+{6}m+{7}\sum_{i=1}^m\diam\left(V_i\right)\le {7(C+k)}\sum_{i=1}^m |V_i|.\end{equation}

Further note that for any $i\in\{1,\dots,m-1\}$ the distance between an arbitrarily chosen point in $V_i$ and one in $V_{i+1}$ is at most
\[k+\diam\left(\bar V_i\right)+\diam\left(\bar V_{i+1}\right)\le k+{12}+{14}C\max(|V_i|,|V_{i+1}|).\]
Therefore, the number of ways to fix the positions of one distinguished point $\pi_i$ in each $V_i$ with $\pi_1=x$ is at most
\begin{equation}
\label{eq:distinguished:bound}\prod_{i=1}^{m-1} (1+2(k+{12}+{14}C\max(|V_i|,|V_{i+1}|)))^d\le \prod_{i=1}^m30(k+C|V_i|)^{2d}.\end{equation}

Combining \cref{eq:proba:bound,eq:n:bound,eq:distinguished:bound} with condition \ref{cond:2} of \cref{prop:set:systems}, the probability we seek to bound in \cref{prop:set:systems} is at most
\begin{multline}
\label{eq:sum:mn}\sum_{\substack{m,n_1,\dots,n_m\ge 1\\\sum_{i=1}^m n_i\ge n/(7(C+k))}}\prod_{i=1}^mC^{n_i}30(k+Cn_i)^{2d}\epsilon^{n_i/C}\\
\le 30\sum_{N=\lceil n/(7C+7k)\rceil}^\infty2^NC^{N}(C+k)^{2dN}\epsilon^{N/C}\le 30\epsilon^{n/(2C(7C+7k))}.\end{multline}
For $\epsilon$ small enough \cref{eq:sum:mn} is clearly at most $\epsilon^{n/{({7}C+{7}k)^2}}$ as desired, completing the proof of \cref{prop:set:systems}.
\end{proof}

\section{Assembling Theorem~\ref{th:convergence}}
\label{sec:convergence}
In this section we assemble the results of \cref{sec:initial:condition,sec:coupling,sec:renorm,sec:Toom} in order to prove \cref{th:convergence}. Therefore, let us fix an update family $\cU$ which is not trivial subcritical (otherwise $\qct[KCM]=1$ and there is nothing to prove) and $U_0\in\cU$ such that $U_0\subset\bbH_u$ for some $u\in S^{d-1}$. Let $v_i'=v_i\lambda_i$ for all $i\in\{1,\dots,d\}$, where $v_i,\lambda_i$ are chosen as in \cref{sec:renorm}. Let $\delta_0$ be as in \cref{cor:Toom} for $k=\sqrt{d+1}$ and $\phi$ be the map corresponding to $\cU_0$-BP (recall \cref{eq:def:cU0}), which is clearly an eroder attractive cellular automaton. Then let $\varepsilon_0$ be as in \cref{cor:CP-BP-comparison}, setting $\delta=\delta_0$. Fix $\alpha\in(0,1-\qct[KCM])$. Let $R=\max(R_0,R_1)$, where $R_0$ is as in \cref{lem:initial:cond} and $R_1$ is as in \cref{cor:CP-BP-comparison}. Let $T=\max(T_0,T_1)$, where $T_0$ is as in \cref{lem:initial:cond} and $T_1$ is as in \cref{cor:CP-BP-comparison}. Let $\varepsilon_1$ be as in \cref{lem:initial:cond}. Finally, let $p\in [\qct[KCM]+\alpha,1]$ and $\varepsilon=\min(\varepsilon_1,\varepsilon_2,1-\qct[KCM]-\alpha)$, with $\varepsilon_2$ from \cref{cor:CP-BP-comparison}, and $q\in[1-\varepsilon,1]$.

Let $\eta^{\mu_p}$ denote the $\cU$-KCM on $\bbZ^d$ with parameter $q$ and initial condition with law $\mu_p$ and $\eta^{\mu_q}$ be the stationary $\cU$-KCM with the same parameter. The initial conditions are coupled so that $\eta_x^{\mu_q}(0)\le \eta_x^{\mu_p}(0)$ for all $x\in\bbZ^d$, if $q\le p$ and $\eta_x^{\mu_q}(0)\ge \eta_x^{\mu_p}(0)$ for all $x\in\bbZ^d$, if $p\le q$. Note, however, that, due to the non-attractiveness of KCM, this inequality need not be preserved by the dynamics. 

Since $\eta^{\mu_q}$ is stationary, we get that 
\begin{equation}
\label{eq:convergence:coupling}
\left|\bbE\left[f(\eta^{\mu_p}(t))-\mu_q(f)\right]\right|\le 2\|f\|_\infty\cdot\bbP\left(\eta_{S}^{\mu_p}(t)\neq \eta^{\mu_q}_{S}(t)\right),\end{equation}
where $S$ denotes the support of the local function $f$. By a union bound over the sites of $S$ and translation invariance, for all $t\ge 0$
\begin{equation}
\label{eq:convergence:single:site}\bbP\left(\eta_{S}^{\mu_p}(t)\neq \eta^{\mu_q}_{S}(t)\right)\le |S|\cdot\bbP\left(\eta_0^{\mu_p}(t)\neq \eta_0^{\mu_q}(t)\right).\end{equation}

We start by running the two KCM up to time $T$. Then \cref{lem:initial:cond} gives that $\min(\eta^{\mu_{p}}_y(T),\eta^{\mu_q}_y(T))\ge \xi_x$ for all $x\in\bbZ^d$, $y\in\hat B_x$, where $\xi\sim\mu_{1-\varepsilon_0}$ is suitably coupled with the two KCM.

Let $\zeta$ be the $\{U_0\}$-CP on $\bbZ^d$ starting at time $T$ with  parameter $q_0={1-}\varepsilon_2\le q$ and initial condition given by $\zeta_y(T)=\xi_x$ for all $y\in \hat B_x$ and $x\in\bbZ^d$. Further recall its orange healthy sites $O_t$ from \cref{sec:coupling}, which we now define w.r.t.\ the initial time $T$ instead of $0$. By the Markov property and \cref{lem:coupling} applied once to $\xi'=\eta^{\mu_p}(T)$ and once to $\xi'=\eta^{\mu_q}(T)$ gives that for all $t\ge T$
\begin{equation}
\label{eq:eta:Ot}\bbP\left(\eta_0^{\mu_p}(t)\neq \eta_0^{\mu_q}(t)\right)\le \bbP(0{\in} O_t).\end{equation}

Recall \cref{eq:def:norm:U,eq:def:Ot,def:kconnected}. Observe that by construction $0\in O_t$ implies not only that $\zeta_0(t)=0$, but also that there exists a $\|\cU\|$-connected set $K\subset \bbZ^d\times[T,\infty)$ in space-time which contains $(0,t)$, intersects the hyperplane $\bbZ^d\times\{T\}$ and satisfies $\zeta_x(\theta)=0$ for all $(x,\theta)\in K$. Indeed, whenever a site is added to $O_t$, it has to be at distance at most $\|\cU\|$ from an orange site. Let us call $E_t$ the event that $(0,t)$ belongs to a $\|\cU\|$-connected component of space-time points $(x,\theta)\in\bbZ^d\times[0,t]$ with $\zeta_x(\theta)=0$, that intersects $\bbZ^d\times\{T\}$. Then we just showed that
\begin{equation}
\label{eq:Ot}\bbP(0{\in }O_t)\le \bbP(E_t).
\end{equation}

We are then ready to apply \cref{cor:CP-BP-comparison}. To that end, let $\omega$ be the $\cU_0$-BP with $\delta_0$-death and initial condition $\bone$ (recall \cref{eq:def:cU0}). \Cref{cor:CP-BP-comparison} gives that we can couple $\omega$ and $\zeta$ in such a way that for all $x\in\bbZ^d$ and $\tau\ge 1$, $\omega_x(\tau)=1$ implies that $\zeta_y(t)=1$ for all $(y,t)\in B_{x,\tau}$. 

Then for $t\ge T$ the event $E_t$ implies that there is a $\sqrt{d+1}$-connected  component of space-time points $(x,\tau)\in\bbZ^{d+1}$ such that $\omega_x(\tau)=0$ containing both $(0,\lfloor t/T\rfloor)$ and a point in $\bbZ^d\times\{1\}$. Let us denote this event by $F_{\lfloor t/T\rfloor-1}$, so that that for any $\tau\ge 0$
\begin{equation}\label{eq:Et}
\bbP(E_t)\le \bbP\left(F_{\lfloor t/T\rfloor-1}\right).\end{equation}

Since the $\cU_0$-BP with $\delta_0$ death is attractive and $\bone$ is the maximal initial condition, denoting by $F_\tau^{\bar\nu}$ the event $F_\tau$ with $\omega$ replaced by the process $\omega^{\bar\nu}$ with initial condition distributed according to its upper invariant measure, we get that for some $c>0$ and any $\tau\ge 0$
\begin{equation}\label{eq:Ftau}
\bbP(F_\tau)\le \bbP\left(F^{\bar\nu}_\tau\right)\le \delta_0^{c(\tau+1)},\end{equation}
where we applied \cref{cor:Toom}. Note that $c$ is now an absolute constant, as it no longer depends on $\cU$, but only on $\cU_0$, which is fixed by \cref{eq:def:cU0}.

Putting \cref{eq:Ftau,eq:Et,eq:Ot} together, we get that for all $t\ge T$
\begin{equation}
    \label{eq:decay:Ot}
    \bbP(0\in O_t)\le \delta_0^{c\lfloor t/T\rfloor}.
\end{equation}
Further recalling \cref{eq:eta:Ot,eq:convergence:coupling,eq:convergence:single:site}, this yields
\[\left|\bbE\left[f(\eta^{\mu_p}(t))-\mu_q(f)\right]\right|\le 2\|f\|_\infty|S|\delta_0^{c\lfloor t/T\rfloor},\]
completing the proof of \cref{th:convergence}.

\section{Renormalisation of CP to LPP}
\label{sec:LPP}
In this section we perform a renormalisation somewhat similar to the one from \cref{sec:renorm}. Recall $\cU_0$ from \cref{eq:def:cU0} and $U_0$ from \cref{sec:coupling} and set $\hat U_0=\{0,-1\}^d\setminus\{0\}$, so that $\cU_0=\{\hat U_0\}$. We seek to control the $\{U_0\}$-CP on the box $\Lambda=\{1,\dots,n\}^d$ with parameter $q_0\in[0,1)$ in terms of the $\hat U_0$-LPP slowed down by some factor. As noted in \cref{subsec:LPP}, the $\hat U_0$-LPP coincides with the usual $\{-e_1,\dots,-e_d\}$-LPP, so we will be able to exploit the fact that this model is known to propagate ballistically.

Recall the boxes $\hat B_x$ from \cref{sec:renorm} (also recall \cref{fig:renorm}) and the direction $u\in S^{d-1}$ such that $U_0\subset \bbH_u$. For each $x\in\bbZ^d$ we will define its \emph{passage time} $t_x\in[0,\infty)$. Let
\begin{equation}
\label{eq:S}
\hat\Lambda=\left\{x\in\bbZ^d:\hat B_x\cap \Lambda\ne\varnothing\right\}
\end{equation}
denote the renormalised version of $\Lambda$. We start by recalling a result of \cite{Greenberg09}.

\begin{proposition}
\label{prop:LPP:filling} Denote by $(s_x)_{x\in \mathbb Z^d}$ the passage times of the standard $\{-e_1,\dots,-e_d\}$-LPP in the set $\hat\Lambda$ defined in \cref{eq:S}, with $s_x=0$ if $x\not\in \hat\Lambda$. Then, there exists $C=C(U_0)<\infty$ such that
    for every $\delta>0$, for $n$ large enough (depending on $\delta$) we have
    \begin{equation}
        \label{sxnC}
    \mathbb P\left(\max_{x\in\bbZ^d} s_x\ge n C\right)\le \delta.
    \end{equation}
\end{proposition}
\begin{remark}
    The proposition is, essentially,  a byproduct of the main result of \cite{Greenberg09}. In that paper, the authors upper bound the mixing time of a discrete-time Markov chain on $d-$dimensional discrete monotone sets.  That dynamics depends on a bias parameter $\lambda$, that is assumed to be sufficiently large, depending only on the dimension $d$. In the limit $\lambda\to\infty$, the dynamics reduces to (a discrete-time version of) standard $d-$dimensional LPP. 
For the reader's convenience, we give a streamlined proof of \cref{prop:LPP:filling} in the context of (continuous-time) LPP and along the way we slightly improve the main statement of \cite{Greenberg09} (which, translated into our language, would give \cref{sxnC} with $nC$ replaced by $n C \log (1/\delta)$.)
\end{remark}
\begin{proof}[Proof of \cref{prop:LPP:filling}]
First of all, using monotonicity of LPP, we can replace the set $\hat\Lambda$ by a cube $\{1,\dots,\ell\}^d$ that contains it. 
Using the fact that $\hat\Lambda$ has diameter upper bounded by $n$ times a $U_0$-dependent constant, we assume henceforth that $\hat\Lambda$ is a cube with $\ell=O(n)$. 

We say that a subset $\sigma\subset \hat\Lambda$ is a \emph{monotone set} if the conditions that $x\in \sigma$ and $y\preceq x$ (that is, $y\le x$ componentwise) imply that $y\in \sigma$. We define a continuous-time Markov chain on the collection $\Sigma$ of monotone sets; the state of the chain at time $t$ is denoted $\sigma(t),$ and we let $\sigma(0):=\varnothing$. Each $x\in \hat\Lambda$ has an independent Poisson clock of rate $1$. When the clock at $x$ rings,  if $x\not\in \sigma(t-)$ and if $\sigma(t-)\cup\{x\}\in\Sigma$, then we let $\sigma(t)=\sigma(t-)\cup\{x\}.$ Otherwise, nothing happens. It is easy to check that this dynamics is equivalent to standard  $\{-e_1,\dots,-e_d\}$-LPP in the sense that the process $(\sigma(t))_{t\ge0}$
has the same law as the process $(\{x\in\hat\Lambda:s_x\le t\})_{t\ge0}.$ In particular, \cref{sxnC} is equivalent to proving that
\begin{equation}
\label{eq:riformulata}
\mathbb P\left(\sigma(n C)=\hat\Lambda\right)>1-\delta.
\end{equation}

The trivial measure concentrated on the absorbing state $\sigma=\hat\Lambda$ is stationary for the monotone set dynamics, because $\sigma(t)\supset \sigma(s)$ for $t>s$. For this reason, \cref{eq:riformulata} is equivalent to the fact that the mixing time of the monotone set dynamics satisfies 
$t_{\rm mix}(\delta)\le Cn$, for $n$ large enough. The idea of \cite{Greenberg09} is to use path coupling with an exponential metric. That is, fix $\gamma>0$ and let $\underline 1=(1,\dots,1)\in\bbZ^d$. Given $\sigma,\sigma'\in\Sigma$ that differ by a single vertex, say, $\sigma=\sigma'\cup\{x\}$, define $d_\gamma(\sigma,\sigma')=e^{-\gamma \<x,\underline 1\>}$,  and extend $d_\gamma$ to be a distance on the whole state space $\Sigma$. The path coupling method (see \cite{Greenberg09} and \cite{Levin09}*{Ch.\ 14} in the discrete time setting, but the method is analogous in continuous time) consists in proving that there exists some $\alpha>0$ such that for each pair $(\sigma,\sigma')$ differing by a single vertex there exists a coupling of the processes $(\sigma(t),\sigma'(t))$ with initial conditions $\sigma,\sigma'$ such that
\begin{equation}
\label{eq:derivative}
    \left.\frac d{dt}\mathbb E \left[d_\gamma(\sigma(t),\sigma'(t))\right]\right|_{t=0}\le -\alpha d_\gamma(\sigma,\sigma').
\end{equation}
If this is the case, then
 \begin{equation}
 \label{SK}
 t_{\rm mix}(\delta)\le \frac1\alpha \log({\rm diam}_\gamma(\Sigma)/\delta),    
 \end{equation}
 with ${\rm diam}_\gamma(\Sigma)$ the diameter of $\Sigma$ with respect to the metric $d_\gamma$. In the case of the exponential metric above,  $\log {\rm diam}_\gamma(\Sigma)=O(\ell)$ and, recalling that $\ell=O(n),$  \cref{SK} implies that $t_{\rm mix}(\delta)\le Cn$, for $n$ large enough, as desired. The computation of the time derivative in \cref{eq:derivative} is very similar to the computation of the discrete-time derivative in \cite{Greenberg09}*{Sec.\ 4}, so we do not repeat it: one finds that there is exactly one update (the one at $x$) that decreases the distance by $d_\gamma(\sigma,\sigma')$, and at most $d$ updates (at vertices neighbouring $x$) that increase the distance by $e^{-\gamma}d_\gamma(\sigma,\sigma')$. Altogether,
 \begin{equation}
    \left.\frac d{dt}\mathbb E\left[ d_\gamma\left(\sigma(t),\sigma'(t)\right)\right]\right|_{t=0}\le -\left(1-d e^{-\gamma}\right)d_\gamma(\sigma,\sigma'),
\end{equation}
which gives \cref{eq:derivative} with $\alpha>0$ for $\gamma$ large enough. This concludes the proof.
\end{proof}

Recall the clock rings $(P_x)_{x\in\bbZ^d}$ and uniform random variables $(\Upsilon_x(t))_{t\in P_x}$ from \cref{subsec:KCM}. We set $t_x=0$ for all $x\in \bbZ^d\setminus\hat\Lambda$. For $x\in\hat\Lambda$, we define $t_x\ge 0$ by induction as follows, assuming that $t_{x+y}$ is already defined for all $y\in\hat U_0$. The stopping time $t_x$ is the first time $t>\tilde t_x:=\max_{y\in\hat U_0}t_{x+y}$ such that for every $z\in\hat B_x\cap \Lambda$ there exists $p_z\in P_z\cap [\tilde t_x,t]$ such that: \begin{itemize}
    \item $\Upsilon_z(p_z)> q_0$,
    \item the collection $(p_z)_{z\in \hat B_x}$ satisfies that $\<z,u\>>\<z',u\>$ implies $p_z>p_{z'}$. 
\end{itemize}
In words, once the boxes $\hat B_{x+y}$ for $y\in\hat U_0$ have been treated, we require the occurrence of a sequence of clock rings corresponding to healing in the $\{U_0\}$-CP and occurring in the order of increasing scalar product with the direction $u$.

The use of these passage times is clear in the following lemma, where, as usual, $\zeta$ denotes the $\{U_0\}$-CP on $\Lambda$ with parameter $q_0$ and boundary condition $\bone_{\bbZ^d\setminus\Lambda}$.
\begin{lemma}
\label{lem:CP-LPP-comparison}
For any $x\in\bbZ^d$, $z\in \hat B_x$ and $t\ge t_x$ we have $\zeta^\bone_z(t)=\zeta^\bzero_z(t)$. 
\end{lemma}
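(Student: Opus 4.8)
The plan is to prove the statement by induction on the passage times $t_x$, ordered along the partial order induced by the LPP structure (equivalently, by induction on the value of $t_x$, using the well-ordering of the countably many passage times in any bounded region). First I would observe that outside $\Xi$ the two processes agree trivially: for $x \notin \Xi$ we have $t_x = 0$ and the boxes $\hat B_x$ don't intersect $\Lambda$, so $\zeta^\bone_z(t) = \zeta^\bzero_z(t) = \tau_z = 1$ by the boundary condition (here one must check $\hat B_x \subset \bbZ^d \setminus \Lambda$, or at least that the relevant sites are boundary sites; this is where $\Xi$ being defined via $\hat B_x \cap \Lambda \ne \varnothing$ matters). So the content is at sites $z \in \hat B_x$ with $x \in \Xi$.

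The key step is the induction itself. Fix $x \in \Xi$ and assume the conclusion holds for all $x+y$ with $y \in \{-1,0\}^d \setminus \{0\}$ that lie in $\Xi$ (and trivially for those outside). By definition of $\tilde t_x = \max_{y} t_{x+y}$, at time $\tilde t_x$ all sites in $\bigcup_{y \in \{-1,0\}^d \setminus \{0\}} \hat B_{x+y}$ are already coupled, i.e.\ $\zeta^\bone_w(s) = \zeta^\bzero_w(s)$ for $s \ge \tilde t_x$ and all such $w$. By the geometric fact \eqref{choice-of-geometry} from \cref{sec:renorm}, namely $U_0 + \hat B_x \subset \bigcup_{z \in \{0,-1\}^d} \hat B_{x+z}$, the constraint $c_z$ for any $z \in \hat B_x$ depends only on sites in $\bigcup_{y \in \{-1,0\}^d} \hat B_{x+y}$. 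The orientation of $U_0$ (that $U_0 \subset \bbH_u$) combined with the ordering requirement $\langle z,u\rangle > \langle z',u\rangle \Rightarrow p_z > p_{z'}$ in the definition of $t_x$ means that when we process the updates $p_z$ at times in $[\tilde t_x, t_x]$ in the order of increasing $\langle z,u\rangle$, each update at $z$ sees a constraint value that is already coupled: the sites $z + U_0$ either lie in a neighbouring box $\hat B_{x+y}$ (coupled since $\tilde t_x$) or in $\hat B_x$ itself but strictly earlier in the $u$-order, hence already processed and coupled by an earlier update in the same sweep. Since $U_z(p_z) > q_0$, both $\zeta^\bone$ and $\zeta^\bzero$ set site $z$ to $0$ at time $p_z$ regardless of the constraint — so after the full sweep, at time $t_x$, every $z \in \hat B_x$ satisfies $\zeta^\bone_z(t_x) = \zeta^\bzero_z(t_x) = 0$. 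Coupling at a single site, once established, persists: the two graphical constructions share the same Poisson clocks and uniforms, and by the same argument propagated forward, all future updates at sites of $\hat B_x$ see coupled constraints (the neighbouring boxes stay coupled, being past their own passage times), so $\zeta^\bone_z(t) = \zeta^\bzero_z(t)$ for all $t \ge t_x$, which is the claim.

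The main obstacle I expect is the bookkeeping in the induction step: making rigorous that "already coupled" propagates correctly through a single $u$-ordered sweep over $\hat B_x$ and then persists for all later times. One has to be careful that an update at $z \in \hat B_x$ with a \emph{successful} (i.e.\ $U_z \le q_0$) clock ring after $t_x$ could in principle re-introduce a discrepancy if its constraint were not coupled — but the constraint depends on $z + U_0$, which by \eqref{choice-of-geometry} and orientation lies in boxes that are coupled at all times $\ge \tilde t_x$ (the neighbours) or in $\hat B_x$ at all times $\ge t_x$; since we're looking at times $\ge t_x \ge \tilde t_x$, everything relevant is coupled, so the update produces the same result in both processes. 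A secondary subtlety is confirming that the induction is well-founded, i.e.\ that $\{x + y : y \in \{-1,0\}^d \setminus \{0\}\}$ genuinely precedes $x$; this follows because $\tilde t_x \le t_x$ by construction and, more structurally, because the dependency graph is the oriented LPP lattice which has no cycles (one can induct on $\sum_i x_i$ restricted to $\Xi$, treating out-of-$\Xi$ sites as base cases with passage time $0$). Everything else — the geometric containment, the order-of-updates argument — is already available from \cref{sec:renorm}, so the proof should be short once the induction scaffolding is set up cleanly.
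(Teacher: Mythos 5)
Your proposal takes essentially the same route as the paper: a nested induction, outer on the oriented LPP dependency lattice and inner on the partial order $\prec$ given by $\langle\cdot,u\rangle$ within $\hat B_x$, using $U_z(p_z)>q_0$ to reset both processes to $0$ at time $p_z$, the geometric containment $U_0+\hat B_x\subset\bigcup_{y\in\{0,-1\}^d}\hat B_{x+y}$, and orientation to argue the constraint at $z$ only sees already-coupled sites. One small but real slip: you assert that at time $t_x$ every $z\in\hat B_x$ satisfies $\zeta^\bone_z(t_x)=\zeta^\bzero_z(t_x)=0$; this need not hold, since a site whose $p_{z'}$ occurred early in the $u$-ordered sweep may have flipped back to $1$ (at a later ring with $U\le q_0$ and satisfied constraint) well before $t_x$. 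What is true, and what the argument actually needs, is only that the two processes agree there. The paper avoids this misstep by making the inner induction hypothesis directly ``$\zeta^\bone_{z'}(t)=\zeta^\bzero_{z'}(t)$ for all $t\ge p_{z'}$'' for every $z'\prec z$; this absorbs your separate ``persistence'' step into the induction itself and makes the erroneous ``$=0$ at $t_x$'' claim unnecessary. With that reformulation your argument matches the paper's.
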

\begin{proof} If $z\in \hat B_x\setminus \Lambda$, the claim is trivial because $\zeta_{{z}}^\bone(t)=\zeta_{{z}}^\bzero(t)=1$ for all $t$. Otherwise, we proceed by induction and we assume that the claim has been proven for all $z'\in\hat B_y$, $y\in x+\hat U_0$, and we want to prove it for $z\in\hat B_x$.
By the induction hypothesis, the two processes are perfectly coupled for $t\ge \tilde t_x$ in the boxes $\hat B_y$ for $y\in x+\hat U_0$. 

For $z,z'\in \bbZ^d$ we write $z\prec z'$ if $
\langle z,u\rangle< \langle z',u\rangle$. Within the box $\hat B_x$, we proceed by induction with respect to this partial order to show that $\zeta^\bone_z(t)=\zeta^\bzero_z(t)$ for all $t\ge p_z$, where $(p_z)_{z\in\hat B_x}$ are provided by the definition of $t_x$. Assume that this is proved for all $z'{\prec} z\in\hat B_x$. Then $\zeta^\bone_z(p_z)=\zeta^\bzero_z(p_z)=0$, since $\Upsilon_z(p_z)>q_0$ (recall \cref{eq:def:CP}). Moreover, using \cref{choice-of-geometry} and the fact that $U_0{\subset}\mathbb H_u$, we obtain 
\[z+U_0\subset\bigcup_{y\in\{0,-1\}^d}\hat B_{x+y}\cap \left\{z'\in\bbZ^d:z'\prec z\right\}.\]
Since the r.h.s.\ above is coupled at all times $t\ge p_z$ by induction hypothesis, the definition of the $\{U_0\}$-CP (see \cref{eq:def:CP,eq:def:cx}) gives that $\zeta^\bone_z(t)=\zeta^\bzero_z(t)$ for all $t\ge p_z$, as desired.
\end{proof}
Combining \cref{prop:LPP:filling,lem:CP-LPP-comparison}, we can now prove the following.
\begin{proposition}
    \label{prop:CP:mixing}
  Let $q_0\in[0,1)$. There exists $c_0=c_0(U_0,q_0)<\infty$ such that for every $\delta>0$ and $n$ large enough (depending on $\delta$), the mixing time of the $\{U_0\}$-CP with parameter $q_0$ (in the box $\Lambda$, with ${\bf 1}$ boundary condition) satisfies
    \begin{equation}
    \label{cn}
        \tmix(\delta)\le c_0n.
    \end{equation}
    \end{proposition}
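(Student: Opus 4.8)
The strategy is to combine the coupling of Lemma~\ref{lem:CP-LPP-comparison} with the known ballisticity of standard LPP to control the coupling time, and then to feed this into a standard ``coupling implies mixing'' argument for the attractive CP. First I would observe that by attractiveness of the $\{U_0\}$-CP, it suffices to bound $d_{\mathrm{TV}}$ of the two extremal processes: for any initial condition $\xi$, the process $\zeta^\xi$ is sandwiched between $\zeta^\bzero$ and $\zeta^\bone$, so $d_{\mathrm{TV}}(\bbP(\zeta^\xi(t)\in\cdot),\bar\mu)\le\bbP(\zeta^\bone(t)\neq\zeta^\bzero(t)\text{ on }\Lambda)$, where $\bar\mu$ is the stationary measure of the finite-volume CP with $\bone$ boundary condition. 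Hence the whole task reduces to showing that, after a linear time, $\zeta^\bone$ and $\zeta^\bzero$ have coupled on all of $\Lambda$ with probability at least $1-\delta$.

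\textbf{Reducing to LPP.} By Lemma~\ref{lem:CP-LPP-comparison}, the two processes are coupled on $\hat B_x$ for all $t\ge t_x$, so the event that they are coupled on $\Lambda$ by time $t$ contains the event $\{\max_{x\in\Xi}t_x\le t\}$, with $\Xi$ the (order-$n^d$-many, but with renormalised diameter of order $n$) indices from \cref{eq:S}. I would now compare the passage times $t_x$ to a slowed-down LPP. The increment $t_x-\tilde t_x$ is the waiting time to see, inside the finitely many sites of $\hat B_x\cap\Lambda$, one ordered sweep of Poisson clicks all landing in the $U_x(p_z)>q_0$ (i.e.\ probability $1-q_0$) part of the mark space, in the order dictated by $\langle\cdot,u\rangle$. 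Since $\hat B_x$ has bounded cardinality (depending only on $R,U_0$) and $q_0<1$ is fixed, this waiting time is stochastically dominated by a sum of boundedly many i.i.d.\ exponential-type variables; crucially these increments, for different $x$, depend on disjoint Poisson/uniform data and so are independent. Therefore $(t_x)_{x\in\Xi}$ is stochastically dominated by an LPP model on $\Xi$ with the $\cU_0=\{0,-1\}^d\setminus\{0\}$ rule and i.i.d.\ weights of finite exponential moment, which (as recalled in \cref{subsec:LPP}) is just the standard $\{-e_1,\dots,-e_d\}$-LPP up to reparametrisation. By the shape theorem / ballisticity results of \cite{Greenberg09} (with the sub-exponential tail large-deviation bounds therein), $\bbP(\max_{x\in\Xi}t_x\ge c_0n)\le e^{-cn}$ for a suitable constant $c_0=c_0(U_0,q_0)$, for $n$ large; since there are only polynomially many $x\in\Xi$, a union bound makes this $\le\delta$ for $n$ large depending on $\delta$. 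Combining with the first paragraph gives $t_{\mathrm{mix}}(\delta)\le c_0 n$.

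\textbf{Main obstacle.} The routine-looking but genuinely delicate point is the passage from the $\{U_0\}$-CP to a \emph{bona fide} LPP with i.i.d.\ weights: one must check that the increments $t_x-\tilde t_x$ can be coupled to an i.i.d.\ family, i.e.\ that conditioning on the past ($\tilde t_x$ and everything before it) does not bias the subsequent waiting time at $x$ — this is where the ordering requirement on $(p_z)_{z\in\hat B_x}$ and the use of fresh Poisson marks after time $\tilde t_x$ matter, and where one has to be careful that $\tilde t_x$ is a stopping time for the relevant filtration so that the strong Markov property applies. A secondary subtlety is that the domain $\Xi$ is a renormalised box of linear size $\Theta(n)$ but is not exactly $\{1,\dots,m\}^d$; however since \cite{Greenberg09} gives a deterministic linear upper bound on LPP over any box, inflating $\Xi$ to a cube of side $Cn$ and invoking monotonicity of LPP in the domain handles this at no cost. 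Everything else — attractiveness sandwiching, the union bound over $\Xi$, extracting a linear bound from the shape theorem — is standard.
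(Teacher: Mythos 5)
Your proposal follows essentially the same route as the paper: reduce via attractiveness to coupling the extremal processes $\zeta^{\bzero}$ and $\zeta^{\bone}$, invoke Lemma~\ref{lem:CP-LPP-comparison} to bound the coupling time by $\max_x t_x$, stochastically dominate the increments $t_x-\tilde t_x$ by i.i.d.\ weights (using disjointness of the boxes $\hat B_x$), compare with a slowed-down standard $\{-e_1,\dots,-e_d\}$-LPP, and conclude by ballisticity of LPP over the renormalised domain. The one place you are too quick is the citation: the main statement of \cite{Greenberg09}, translated to this setting, gives $t_{\rm mix}(\delta)\le Cn\log(1/\delta)$ rather than $Cn$, and does not directly supply the exponential tail $e^{-cn}$ you invoke; this is precisely why the paper proves a separate continuous-time statement (Proposition~\ref{prop:LPP:filling}) via path coupling with an exponential metric, sharpening \cite{Greenberg09} to remove the $\log(1/\delta)$ factor. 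Your flagging of the stopping-time/memorylessness subtlety needed to justify independence of the increments is apt --- the paper dismisses this as ``clearly independent'' --- but it does not change the argument's structure.
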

Note that in \cref{prop:CP:mixing} we do not need $q_0$ to be large. However, for small $q_0$ the CP in infinite volume has trivial upper invariant measure and the mixing time in the box should be logarithmic in $n$.
\begin{proof}
It is not hard to check that $t_x-\tilde t_{x}$ is stochastically dominated by the sum of $|\hat B|$ independent exponential random variables of parameter $1-q_0$. On the other hand, the size of $\hat B$ depends only on the set $U_0$. Since the sum of $N$ exponential random variables is stochastically dominated by a single exponential random variable with suitably large ($N$-dependent) expectation, we have that $t_x-\tilde t_x$ is stochastically dominated by an exponential random variable of parameter depending on $q_0$ and $U_0$. Since LPP is monotone in the quantities $t_x-\tilde t_x$, which are clearly independent for different $x$, we get that $t_x$ is smaller than the passage time of a $\hat U_0$-LPP slowed down by a factor $\theta(q_0,U_0)\in[1,\infty)$, which we noted in \cref{subsec:LPP} to coincide with the standard $\{-e_1,\dots,-e_d\}$-LPP slowed down by the same factor. 

By \cref{lem:CP-LPP-comparison} and attractiveness of CP we have that $\max_{x\in\bbZ^d} t_x$ is an upper bound on the coupling time of the $\{U_0\}$-CP on $\Lambda$ with boundary condition $\bone$. The proof is concluded by \cref{prop:LPP:filling,eq:def:tmix}, as in \cref{eq:convergence:coupling}.
\end{proof}

\section{Assembling Theorem~\ref{th:mixing}}
\label{sec:mixing}
\begin{proof}[Proof of the lower bound in \cref{eq:UBLBtmix}]
Let 
\[ \Lambda_\ell=\left\{\lfloor n/2\rfloor -\ell,\dots,\lfloor n/2\rfloor +\ell\right\}^d.\]
One can choose $\ell $ large enough (depending  on $\delta$ and $q$, but not on $n$) so that the probability, under the stationary measure for the process in the box $\Lambda=\{1,\dots,n\}^d$ with boundary condition $\bone$, that  $\Lambda_\ell$ is in state $\bzero$ is smaller than $\delta/2$. For the $\mathcal U-$KCM, this is obvious, since the stationary measure is a Bernoulli product measure with parameter $q>0$. For the $\mathcal U-$CP, this follows from the fact that the stationary measure is stochastically larger than the restriction to $\Lambda$ of the upper invariant measure $\bar\nu$ on $\bbZ^d$ (recall \cref{subsec:CP}). The latter is  ergodic (see \cite{Liggett05}*{Theorem III.2.3.(f)}) and non-trivial under the assumption that $q{>\qc[CP]}$ (recall \cref{eq:def:qcCP}). But $q>\qc[CP]$, since $q$ is sufficiently close to $1$ and $\qc[CP]<1$ for $\cU$ which is not trivial subcritical (recall \cref{rem:qcCP}).

On the other hand, starting the dynamics (either KCM or CP) from the $\bzero$ configuration in $\Lambda$ with $\bone$ boundary condition, with high probability it takes a time 
at least $c n$ (with $c$ independent of $\delta$) before the state in $\Lambda_\ell$ changes. This follows from the definition of the dynamics, the fact that the box $\Lambda_\ell$ is at distance at least $n/4$ from the boundary of $\Lambda$ and from standard estimates on first passage percolation (see e.g.\ \cite{Liggett99}*{Section I.1}).
\end{proof}

\begin{proof}[Proof of the upper bound in \cref{eq:UBLBtmix}]
We need  to show that, for every $\delta>0$ and $n $ large enough (depending on $\delta$), the $\mathcal U$-KCM in $\Lambda$ with parameter $q$ and boundary condition ${\bf 1}$, started from the initial state ${\bf 1}$  has coupled at time $cn$ with the one started from an arbitrary configuration $\xi'\in\Omega_\Lambda$, with probability at least $1-\delta$. The proof for the $\cU$-CP is identical and therefore omitted.

From \cref{prop:CP:mixing} we know that (up to a total variation error $\delta/2$) for any $t\ge T_0:=c_0n$ the $\{U_0\}$-CP with parameter $q_0\le q$ large enough, in $\Lambda$ with boundary condition $\bone$ and any initial condition, has coupled. Namely, denoting these processes by $\zeta^{\xi'',\Lambda}$ for initial conditions $\xi''\in\Omega_\Lambda$, we get
\[\bbP\left(\forall t\ge T_0,\zeta^{\bzero,\Lambda}(t)=\zeta^{\bone,\Lambda}(t)\right)\ge 1-\delta/2.\]
By attractiveness we have that at any time $\zeta^{\bone,\Lambda}$ dominates the restriction to $\Lambda$ of the infinite volume $\{U_0\}$-CP $\zeta^\bone$ with initial condition $\bone$ and parameter $q_0$. Thus,
\begin{equation}
\label{eq:zeta10}
\bbP\left(\forall t\ge T_0,\forall x\in\Lambda,\zeta_x^\bone{(t)}\le\zeta_x^{\bzero,\Lambda}{(t)}\right)\ge 1-\delta/2.\end{equation}

For all $t\ge T_0$, let $O_t^\bone$ denote the set of orange healthy sites for $\zeta^\bone$ and let $O_t^{\bzero,\Lambda}$ denote the one of $\zeta^{\bzero,\Lambda}$, where both are initialised at time $T_0$ to be equal to the set of all healthy sites and then defined via \cref{eq:def:Ot}. Using \cref{eq:def:Ot}, is not hard to check by induction on the number of clock rings in $\Lambda$ in the time interval $[T_0,t]$, that, if the event in the left hand side of \cref{eq:zeta10} occurs, then for any $t\ge T_0$ we have $O^\bone_t\cap\Lambda\supset O_t^{\bzero,\Lambda}$. Hence,
\[\bbP\left(\forall t\ge T_0, O^\bone_t\cap\Lambda\supset O_t^{\bzero,\Lambda}\right)\ge 1-\delta/2.\]
Moreover, applying \cref{lem:coupling} (and \cref{claim:comparison} up to time $T_0$), a union bound and translation invariance, we have that for $t\ge T_0$
\begin{equation}
\label{eq:deltamezzi}
\mathbb P\left(\eta^{\bf 1}(t)\ne \eta^{\xi'}(t)\right)\le \bbP\left(O^{\bzero,\Lambda}_t\neq\varnothing\right)\le \delta/2+|\Lambda|\cdot \bbP\left(0\in O_t^\bone\right).
\end{equation}
The proof of \cref{th:mixing} is therefore concluded, using monotonicity once again together with \cref{eq:decay:Ot}, taking e.g.\ $t=T_0+\sqrt{n}$, since none of the other quantities depend on $n$. Indeed, in \cref{sec:convergence}, \cref{eq:decay:Ot} was valid for the $\{U_0\}$-CP starting from an initial configuration renormalising to $\xi\sim\mu_{1-\varepsilon_0}$ with $\varepsilon_0$ small enough, while here we simply have $\varepsilon_0=0$.
\end{proof}

\section*{Acknowledgements}\
This work was supported by the Austrian Science Fund (FWF): P35428-N.  We thank Paul Chleboun, Damiano de Gaspari, Markus Heydenreich, Laure Mar\^ech\'e, Rob Morris, R\'eka Szab\'o and Cristina Toninelli for enlightening discussions and to the anonymous referees for valuable comments.

\bibliographystyle{plain}
\bibliography{Bib}

\end{document}